\documentclass[12pt,reqno]{amsart}

\usepackage{amsfonts}
\usepackage{amscd}
\usepackage{amssymb}
\usepackage{amsfonts}
\usepackage{amscd}
 \usepackage{amsmath} 
\usepackage{mathrsfs}
\usepackage{enumerate}
\usepackage{float} % To use [H]
\usepackage[pdftex]{graphicx}
\allowdisplaybreaks

\usepackage{hyperref}

%\usepackage{enumerate}

%% FOR DRAFTS:e%\usepackage{backref}
%\usepackage[notcite,notref]{showkeys}

\date{\today}

%\usepackage{fullpage}
%\usepackage[margin=1.0in]{geometry}
% ----------------------------------------------------------------
\vfuzz2pt % Don't report over-full v-boxes if over-edge is small
\hfuzz2pt % Don't report over-full h-boxes if over-edge is small
% THEOREMS -------------------------------------------------------
\newtheorem{thm}{Theorem}[section]
\newtheorem{cor}[thm]{Corollary}
\newtheorem{lem}[thm]{Lemma}

\newtheorem{prop}[thm]{Proposition}
\theoremstyle{definition}

\theoremstyle{remark}
\newtheorem{rem}[thm]{Remark}

\numberwithin{equation}{section}

\setlength{\textwidth 6.5in} \setlength{\textheight 9.0in}
\voffset -0.7in \hoffset -0.6in

% MATH -----------------------------------------------------------

\newcommand{\R}{\mathbb R}

\newcommand{\Na}{\mathbb N}

\newcommand{\C}{{\mathbb C}}

\renewcommand{\Re}{\operatorname{Re}}
\renewcommand{\Im}{\operatorname{Im}}
 
% ----------------------------------------------------------------
% COLOR (for drafts)
\usepackage{color}

\newcommand{\red}[1]{\textcolor{red}{#1}}

\title[Chernoff's theorem for symmetric spaces]
{On a theorem of Chernoff on rank one \\ Riemannian symmetric spaces  }

\author[ Ganguly, Manna and Thangavelu]{ Pritam Ganguly, Ramesh Manna and Sundaram Thangavelu}

\address{Department of Mathematics\\
Indian Institute of Science\\
560 012 Bangalore, India}

\email{pritamg@iisc.ac.in, rameshmanna@iisc.ac.in, veluma@iisc.ac.in}

 \date{}
 \keywords{Chernoff's theorem, Riemannian symmetric spaces, Helgason Fourier transform, Jacobi analysis. }
 \subjclass[2010]{Primary:  43A85, 43A25  , Secondary:22E30, 33C45}
 
 %33C45:Orthogonal polynomials and functions of hypergeometric type (Jacobi, Laguerre, Hermite, Askey scheme, etc.) 
  
 %43A25 View Publications (1973-now) Fourier and Fourier-Stieltjes transforms on locally compact and other abelian groups

\begin{document}

\maketitle

\begin{abstract} 
 In 1975, P.R. Chernoff used iterates of the Laplacian on $\mathbb{R}^n$ to prove an $L^2$ version of the Denjoy-Carleman theorem which provides a sufficient condition for a smooth function on $\mathbb{R}^n$ to be quasi-analytic. In this paper we prove an exact analogue of Chernoff's theorem for all  rank one Riemannian symmetric spaces (of noncompact and compact types) using  iterates of the associated Laplace-Beltrami operators. 
 \end{abstract}

\section{Introduction and the main results}
 The paramount property of an analytic function is that it is  completely determined by its value and the values of all its derivatives at a single point. Borel first perceived that there is a more larger class of smooth functions than that of analytic functions which has this magnificent property. He coined the term \textit{quasi-analytic} for such class of functions. In exact terms a subset of smooth functions on an interval $(a,b)$ is called a quasi-analytic class if for any function $f$ from that set and $x_0\in (a,b)$, $\frac{d^n}{dx^n}f(x_0)=0$ for all $n\in\mathbb{N}$ implies $f=0.$ Now recall that a smooth function on an interval $I$ is analytic provided its Taylor series converges to the function on $I$ which naturally restricts the growth of derivatives of that function. In fact, if for every $n$, $\|\frac{d^n}{dx^n}f\|_{L^{\infty}(I)}\leq C n!A^n$ for some constant $A$ depending on $f$ then the Taylor series of $f$ converges to $f$ uniformly and the converse is also true. This drives an analytic mind to investigate whether relaxing growth condition on the derivatives generates quasi-analytic class. In 1912 Hadamard proposed the problem of finding  sequence $\{M_n\}_n$ of positive numbers such that the class $C\{M_n\}$ of smooth functions on $I$ satisfying  $\|\frac{d^n}{dx^n}f\|_{L^{\infty}(I)}\leq A_f^nM_n$  for all $f\in C\{M_n\}$ is a quasi-analytic class. A solution to this problem is provided by a theorem of Denjoy and Carleman where they showed that $C\{M_n\}$ is quasi-analytic if and only if $\sum_{n=1}^{\infty}M_n^{-1/n}=\infty$. As a matter of fact Denjoy \cite{Den} first proved a sufficient condition  and later Carleman \cite{Car} completed the theorem giving a necessary and sufficient condition.  A short proof of this theorem based on complex analytic ideas can be found in Rudin \cite{R}. A several variable analogue of this theorem has been obtained by Bochner and Taylor \cite{BT} in 1939.

Later in 1950, instead of using all partial derivatives, Bochner used iterates of the Laplacian $\Delta$ and proved an analogue of Denjoy-Carleman theorem which reads as follows: if $f\in C^{\infty}(\mathbb{R}^n)$ satisfies $\sum_{m=1}^{\infty}\|\Delta^mf\|^{-1/m}_{\infty}=\infty,$ then the condition  $\Delta^mf(x)=0$ for all $m\geq 0$ and for all $x$ in a set $U$ of analytic determination implies $f=0.$  Building upon the works of  Masson- McClary \cite{MM} and Nussbaum \cite{N}, in 1972  Chernoff \cite{C1} used operator theoretic arguments to study quasi-analytic vectors. As an application he improved the above mentioned result of Bochner by proving the following very interesting result in 1975.
 
\begin{thm}\cite[Chernoff]{C}
		Let $f$ be a smooth function on $\mathbb{R}^n.$ Assume that $\Delta^mf\in L^2(\mathbb{R}^n)$ for all $m\in \mathbb{N}$ and $\sum_{m=1}^{\infty}\|\Delta^mf\|_2^{-\frac{1}{2m}}=\infty.$ If $f$ and all its partial derivatives   vanish at  a point  $ a \in \R^n$, then $f$ is identically zero.
\end{thm}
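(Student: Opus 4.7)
The plan is to reduce the multivariate statement to the classical one-dimensional Denjoy--Carleman theorem by restricting $f$ to lines through $a$ and converting the $L^2$ hypothesis on the Laplacian iterates into a Carleman-type growth bound for each restricted function. First I would translate so that $a=0$ (both the Laplacian and the $L^2$ norm are translation invariant), giving $\partial^\alpha f(0)=0$ for every multi-index $\alpha$. Plancherel then recasts the main hypothesis as $\bigl\||\xi|^{2m}\hat f\bigr\|_2=\|\Delta^m f\|_2$, so
\[
\sum_{m=1}^{\infty}\bigl\||\xi|^{2m}\hat f\bigr\|_2^{-1/(2m)}=\infty.
\]

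For each $\omega\in\sn$, set $F_\omega(t):=f(t\omega)$; vanishing of all partials at $0$ yields $F_\omega^{(k)}(0)=0$ for every $k\ge 0$. Once it is shown that the slice $F_\omega$ satisfies the one-variable Carleman condition $\sum_k \|F_\omega^{(k)}\|_\infty^{-1/k}=\infty$, the classical one-dimensional Denjoy--Carleman theorem forces $F_\omega\equiv 0$, and letting $\omega$ range over $\sn$ gives $f\equiv 0$. The needed pointwise estimates come from Fourier inversion: for even orders,
\[
F_\omega^{(2m)}(t) \;=\; (2\pi)^{-n/2}\int_{\R^n}(-1)^m(\omega\cdot\xi)^{2m}e^{it\,\omega\cdot\xi}\,\hat f(\xi)\,d\xi,
\]
and Cauchy--Schwarz against the weight $(1+|\xi|^2)^{-s}$ for some $s>n/2$ yields
\[
\|F_\omega^{(2m)}\|_\infty \;\le\; C_n\,\bigl(\|\Delta^{m+N}f\|_2+\|\Delta^m f\|_2\bigr)
\]
with a fixed Sobolev shift $N=N(n)$. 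Odd derivatives are handled by the interpolation bound $\|F_\omega^{(2m+1)}\|_\infty^{\,2}\le \|F_\omega^{(2m+2)}\|_\infty\,\|F_\omega^{(2m)}\|_\infty$, obtained by integration by parts.

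The hard part is ensuring that these two operations---the fixed Sobolev shift and the geometric-mean interpolation---do not destroy the divergence of the Carleman sum. Starting from $\sum_m M_m^{-1/(2m)}=\infty$ with $M_m=\|\Delta^m f\|_2$, one must deduce $\sum_k \widetilde{M}_k^{-1/k}=\infty$ for a majorant $\widetilde M_k$ of $\|F_\omega^{(k)}\|_\infty$. This is delicate but standard: replace $\{M_m\}$ by its log-convex regularization and invoke the fact that finite index shifts and geometric means leave the Carleman criterion intact. With that stability in place, the one-dimensional Denjoy--Carleman theorem applied uniformly in $\omega$ closes the argument.
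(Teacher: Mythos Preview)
The paper does not actually prove Theorem~1.1; it quotes it as Chernoff's 1975 result. What the paper \emph{does} do is sketch a proof of the equivalent reformulation (Theorem~1.7), and carries out the analogous argument in full for the symmetric-space case (Theorem~1.2). That method is quite different from yours: the paper writes $x=r\omega$, expands $F(r,\omega)=f(r\omega)$ in spherical harmonics on $\mathbb{S}^{n-1}$, and uses the Hecke--Bochner identity to reduce the problem to an $L^2$ Chernoff-type theorem for the Bessel operators $\partial_r^2+(n+2m-1)r^{-1}\partial_r$ acting on the radial coefficients. The whole argument stays in $L^2$; the Carleman condition is transferred to each radial piece by a direct Plancherel estimate of the form $\|\mathcal{L}^m f_{\delta}\|_2\le C^m\|\Delta^m f\|_2$, with no Sobolev shift and no index displacement. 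This is what makes the scheme portable to rank-one symmetric spaces, where restriction to geodesics does not interact so cleanly with the Laplace--Beltrami operator.

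Your route---restrict to lines, pass to $L^\infty$ via Sobolev embedding, invoke the classical one-dimensional Denjoy--Carleman theorem---is closer in spirit to Bochner's 1950 argument and is in principle sound, but the step you flag as ``delicate but standard'' is really the crux and deserves more care than you give it. Your bound reads $\|F_\omega^{(2m)}\|_\infty\le C\,(M_m+M_{m+N})$, and you must then show that $\sum_m M_m^{-1/(2m)}=\infty$ forces $\sum_m (M_m+M_{m+N})^{-1/(2m)}=\infty$. For an arbitrary sequence this is \emph{false}: take $M_m=1$ for $m$ even and astronomically large for $m$ odd. What saves you here is that $M_m^2=\int|\xi|^{4m}|\hat f|^2\,d\xi$ is a moment sequence, hence automatically log-convex by Cauchy--Schwarz, and for log-convex sequences the Carleman condition is equivalent to $\sum M_{m}/M_{m+1}=\infty$, which is manifestly shift-stable. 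You should say this explicitly rather than appealing to a regularization step. With that point nailed down, your argument goes through; but note that it does not export to the symmetric-space setting the way the paper's $L^2$ spherical-harmonic reduction does.
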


In this paper we prove an analogue of Chernoff's theorem for the Laplace-Beltrami operator on rank one symmetric spaces of both compact and  noncompact types. In order to state our results we first need to introduce some notations. Let $G$ be a connected, noncompact semisimple Lie group with finite centre and $K$  a maximal compact subgroup of $G$. Let $ X=G/K$ be the associated symmetric space which is  assumed to have  rank one.   The origin  $o$ in the symmetric space is given by the identity coset $eK$  where $e$ is the identity element in $G$. We know that $X$ is a Riemannian manifold equipped with a $G$ invariant  metric on it.  We denote by  $\Delta_{X}$  the Laplace-Beltrami operator associated to $X$. 

 The Iwasawa decomposition of $G$ reads as $G=KAN$ where $A$ is abelian and $N$ is a nilpotent Lie group. Let $\mathfrak{ g}$ and $\mathfrak{a}$ stand for  the Lie algebras corresponding to $G$ and $A$ respectively. Here $\mathfrak{a}$ is one dimensional since $X$ is of rank one. It is well known that every element of $\mathfrak{ g}$  gives rise to a left invariant vector field on $G$. Let $H$ be the left invariant vector field corresponding to  a fixed  basis element of $\mathfrak{a}.$ We will describe all these notations in detail in the next section.  As an exact analogue of Chernoff's theorem for $ X $ we prove the following:

\begin{thm}
	\label{C}
 Let ${X}=G/K$ be a rank one symmetric space of noncompact type. Suppose $f\in C^{\infty}(X)$ satisfies $\Delta_X^mf\in L^2(X)$ for all $m\geq 0$ and $\sum_{m=1}^{\infty}\|\Delta_{X}^mf\|_2^{-\frac{1}{2m}}=\infty.$ If   ${H}^lf(eK)=0$ for all $l\geq 0$ then $f$ is identically zero.
\end{thm}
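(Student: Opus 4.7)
The plan is to reduce Theorem \ref{C} to a one-dimensional Chernoff-type statement for an associated Jacobi operator, by using the Helgason Fourier transform together with a $K$-type decomposition.

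As a first step, I would establish a Jacobi analogue of Chernoff's theorem: if $g \in L^2(\R^+, |c(\lambda)|^{-2}\,d\lambda)$ satisfies the moment growth condition
\begin{equation*}
\sum_{m=1}^{\infty} \left( \int_0^\infty (\lambda^2+\rho^2)^{2m}\,|g(\lambda)|^2\,|c(\lambda)|^{-2}\,d\lambda \right)^{-1/(4m)} = \infty,
\end{equation*}
and all appropriate Jacobi moments of $g$ vanish, then $g \equiv 0$. The proof would follow the classical Denjoy--Carleman template: the moment bounds place an auxiliary function (built from $g$ on the spectral side, e.g.\ its inverse Jacobi transform) in a quasi-analytic class, and the vanishing of its Jacobi moments forces it to vanish to infinite order at the origin, hence identically.

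Next, I would transfer this to the symmetric space $X$. Decomposing $f = \sum_{\delta \in \widehat{K}_M} f_\delta$ into $K$-types, each component $f_\delta$ inherits the hypotheses, and by the Plancherel formula for the Helgason Fourier transform,
\begin{equation*}
\|\Delta_X^m f_\delta\|_2^2 = \int_\R (\lambda^2+\rho^2)^{2m}\,|\widehat{f_\delta}(\lambda)|^2\,d\mu_\delta(\lambda),
\end{equation*}
where $\widehat{f_\delta}$ is a Jacobi-type transform with suitably shifted parameters and $d\mu_\delta$ is the corresponding Plancherel density. The hypothesis $H^l f(eK) = 0$, $l\geq 0$, should translate, via the Helgason inversion formula applied at $\exp(tH)\cdot o$ followed by Taylor expansion in $t$, into the vanishing of all Jacobi moments of every $\widehat{f_\delta}$. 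The one-variable result from the first step then yields $\widehat{f_\delta} \equiv 0$, whence $f_\delta = 0$ and finally $f = 0$.

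The step I expect to be hardest is the translation of the pointwise vanishing condition $H^l f(eK)=0$ into vanishing moments on the Fourier side. Unlike the Euclidean case, Helgason inversion at $\exp(tH)\cdot o$ is a double integral over $\lambda\in\R$ and $kM\in K/M$ with a kernel built from the Iwasawa projection $H(\exp(-tH)k)$, so the $l$th $t$-derivative at $0$ produces a $K/M$-averaged polynomial in $\lambda$ rather than a pure power. Reconciling this with the $K$-type decomposition, and doing so uniformly in the $|c(\lambda)|^{-2}$ weight across the four families of rank-one noncompact symmetric spaces (real, complex, quaternionic, and octonionic hyperbolic), is where the technical heart of the proof will sit.
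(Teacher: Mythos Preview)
Your overall plan---reduce to a one-variable Jacobi Chernoff theorem via a $K$-type decomposition of the Helgason Fourier transform---is exactly the paper's strategy. The difference lies in where you carry out the argument, and this difference matters.

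You propose to work on the spectral side: show that $H^l f(eK)=0$ forces the ``Jacobi moments'' of each $\widehat{f_\delta}$ to vanish, by applying Helgason inversion at $\exp(tH)\cdot o$ and Taylor-expanding in $t$. You correctly flag this as the hardest step, because the kernel $e^{(i\lambda+\rho)A(a_t,kM)}$ mixes $\lambda$ and $K/M$ in a complicated way. The paper sidesteps this entirely by staying on the \emph{spatial} side. For each $\delta,j$ it forms the radial function
\[
g_{\delta,j}(r)=\int_K f(ka_r)\,Y_{\delta,j}(kM)\,dk,
\]
and observes that $\frac{d^l}{dr^l}f(ka_r)\big|_{r=0}=H^l f(k)$. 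Since $H^l f$ is right $K$-invariant, $H^l f(k)=H^l f(eK)=0$ for every $k\in K$, so $g_{\delta,j}$ vanishes to infinite order at $r=0$ immediately---no inversion formula, no Taylor expansion in the Iwasawa projection. After dividing by $(\sinh r)^p(\cosh r)^q$ one obtains a function $f_{\delta,j}$ to which the spatial Jacobi Chernoff theorem (vanishing of $\mathcal{L}_{\alpha+p,\beta+q}^m f_{\delta,j}(0)$, not moment vanishing) applies directly. Your ``hardest step'' thus evaporates once you switch viewpoints.

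One further technical point you have not isolated: the Carleman hypothesis controls $\|\Delta_X^m f\|_2$, which via Plancherel involves the weight $|c_{\alpha,\beta}(\lambda)|^{-2}$, whereas the Jacobi norms of $f_{\delta,j}$ involve $|c_{\alpha+p,\beta+q}(\lambda)|^{-2}$ and the Kostant polynomial $Q_\delta(i\lambda+\rho)$. Bridging these requires a uniform bound on $|c_{\alpha,\beta}(\lambda)|^2\,|c_{\alpha+p,\beta+q}(\lambda)|^{-2}\,|Q_\delta(i\lambda+\rho)|^{-2}$, which the paper proves as a separate lemma; your sketch assumes this comparison away inside ``$d\mu_\delta$''.
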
 
As an immediate consequence of the above result we obtain an analogue of the $L^2$ version of the classical Denjoy-Carleman theorem using iterates of the Laplace-Beltrami operator on $X = G/K$.
 \begin{cor}
 Let ${X}=G/K$ be a rank one symmetric space of noncompact type.	Let $\{M_k\}_k$ be a log convex sequence. Define $\mathcal{C}(\{M_k\}_k,\Delta_X,X)$ to be the class of all smooth functions $f$ on $X$ satisfying   $\Delta_X^mf\in L^2(X)$ for all $  m\in \mathbb{N}$ and $\|\Delta_X^kf\|_2\leq M_k\lambda(f)^k$ for some constant $\lambda(f)$ depending  on $f$. Suppose that $\sum_{k=1}^{\infty}M_k^{-\frac{1}{2k}}=\infty.$ Then every member of that class is quasi-analytic.
 \end{cor}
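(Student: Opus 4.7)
The plan is to derive the corollary as a direct application of Theorem \ref{C}. Suppose $f \in \mathcal{C}(\{M_k\}_k, \Delta_X, X)$ satisfies the vanishing condition $H^l f(eK) = 0$ for all $l \geq 0$; I would like to conclude $f \equiv 0$. Since membership in the class already supplies $\Delta_X^m f \in L^2(X)$ for every $m$, the only remaining hypothesis of Theorem \ref{C} that must be verified is the divergence of $\sum_{m=1}^\infty \|\Delta_X^m f\|_2^{-1/(2m)}$.

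The key step is an elementary comparison. By the defining bound $\|\Delta_X^k f\|_2 \leq M_k \lambda(f)^k$, one has
\begin{equation*}
\|\Delta_X^k f\|_2^{-\frac{1}{2k}} \;\geq\; \bigl(M_k \lambda(f)^k\bigr)^{-\frac{1}{2k}} \;=\; \lambda(f)^{-\frac{1}{2}}\, M_k^{-\frac{1}{2k}}.
\end{equation*}
Summing in $k$ and using the standing hypothesis $\sum_{k=1}^\infty M_k^{-1/(2k)} = \infty$ (with $\lambda(f)^{-1/2}$ a fixed positive constant) gives
\begin{equation*}
\sum_{k=1}^\infty \|\Delta_X^k f\|_2^{-\frac{1}{2k}} \;\geq\; \lambda(f)^{-\frac{1}{2}} \sum_{k=1}^\infty M_k^{-\frac{1}{2k}} \;=\; \infty.
\end{equation*}
Thus the Chernoff-type hypothesis in Theorem \ref{C} is met, and that theorem forces $f \equiv 0$.

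There is no substantive obstacle in this derivation; the corollary is essentially a repackaging of Theorem \ref{C} into a Denjoy--Carleman formulation, and the role of the $M_k$-bounds is merely to guarantee, uniformly in $f$, the summability condition on $\|\Delta_X^m f\|_2^{-1/(2m)}$ required to invoke the theorem. I note that the log convexity of $\{M_k\}_k$ plays no role in the implication itself; it is retained because it is the natural structural assumption ensuring that the class $\mathcal{C}(\{M_k\}_k, \Delta_X, X)$ enjoys the algebraic stability properties (closure under products, pointwise bounds, etc.) that are expected of a Denjoy--Carleman class in analogy with the classical Euclidean setting.
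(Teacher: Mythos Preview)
Your argument is correct and matches the paper's approach: the corollary is stated there as ``an immediate consequence'' of Theorem~\ref{C}, and your derivation---bounding $\|\Delta_X^k f\|_2^{-1/(2k)} \geq \lambda(f)^{-1/2} M_k^{-1/(2k)}$ and invoking Theorem~\ref{C}---is exactly the intended one-line deduction. Your remark that log convexity is not used in the implication itself is also accurate.
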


As Chernoff's theorem is a useful tool in establishing uncertainty principles of Ingham's type, proving analogues of  Theorem 1.1 in contexts other than Euclidean spaces have received considerable attention in recent years. Recently, an analogue of Chernoff's theorem for the sublaplacian on the Heisenberg group has been proved in \cite{BGST}. For noncompact Riemannian symmetric spaces $ X = G/K$,  without any restriction on the rank, the following weaker version of Theorem 1.2 has been proved in Bhowmik-Pusti-Ray \cite{BPR}.

\begin{thm}[Bhowmik-Pusti-Ray]
	\label{thm-BPR}
 Let ${X}=G/K$ be a  noncompact Riemannian symmetric space and let $ \Delta_X $ be the associated Laplace-Beltrami operator. Suppose $f\in C^{\infty}(X)$ satisfies $\Delta_X^mf\in L^2(X)$ for all $m\geq 0$ and $\sum_{m=1}^{\infty}\|\Delta_{X}^mf\|_2^{-\frac{1}{2m}}=\infty.$ If $ f $ vanishes on a non empty open set,  then $f$ is identically zero.
\end{thm}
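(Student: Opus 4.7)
The plan is to transfer the problem to the Helgason Fourier side, decompose in $K$-types in the boundary variable, and reduce the theorem to a family of one--dimensional Denjoy--Carleman statements handled by the classical one--variable case (Theorem 1.1 with $n=1$).

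For $f\in L^2(X)$, let $\tilde f(\lambda, b)$ denote the Helgason Fourier transform, $\lambda\in \mathbb R$, $b\in K/M$. Plancherel together with the identity $\widetilde{\Delta_X f}(\lambda, b) = -(\lambda^2+\rho^2)\tilde f(\lambda, b)$ converts the Chernoff hypothesis into the weighted Carleman bound
\[
\sum_{m=1}^\infty \left(\int_{\mathbb R\times K/M}(\lambda^2+\rho^2)^{2m}\,|\tilde f(\lambda, b)|^2\,|c(\lambda)|^{-2}\,d\lambda\,db\right)^{-1/(4m)}=\infty.
\]
Expanding $\tilde f(\lambda, \cdot)$ in the $M$--fixed vectors $\{Y^\delta_j\}$ of the $K$--irreducibles $\delta\in\widehat{K}_M$ gives
\[
\tilde f(\lambda, b)=\sum_{\delta\in\widehat{K}_M}\sum_j a_{\delta,j}(\lambda)\, Y^\delta_j(b),
\]
and orthogonality on $K/M$ transfers the Carleman bound to each scalar coefficient $a_{\delta,j}$ in the weighted space $L^2(\mathbb R, |c(\lambda)|^{-2}d\lambda)$.

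I next translate $H^l f(eK)=0$ into moment conditions on the $a_{\delta,j}$. Writing the inversion formula along the geodesic $t\mapsto\exp(tH)\cdot eK$ and Taylor--expanding the Eisenstein kernel $e^{(i\lambda-\rho)A(\exp(tH)\cdot eK,\, b)}$ in $t$ at $t=0$, the hypothesis becomes
\[
0 = H^l f(eK) = \int \tilde f(\lambda, b)\, \mathcal P_l(\lambda, b)\,|c(\lambda)|^{-2}\,d\lambda\,db \qquad (l\ge 0),
\]
with $\mathcal P_l(\lambda, b)$ a polynomial of degree $l$ in $\lambda$ whose $b$--dependent coefficients are built from the Taylor coefficients in $t$ of the Iwasawa map $A(\exp(tH)\cdot eK, b)$ at $t=0$. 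Exploiting the rank--one structure, in particular the explicit Jacobi/Gegenbauer expansion of the Eisenstein kernel in the $b$--variable, and projecting against each $Y^\delta_j$, I extract for every $(\delta, j)$ a family of moment conditions
\[
\int_\mathbb R a_{\delta,j}(\lambda)\, q_l(\lambda)\, |c(\lambda)|^{-2}\,d\lambda = 0 \qquad (l\ge 0),
\]
against a family of polynomials $\{q_l\}$ whose span is sufficiently rich (in rank one, dense among even polynomials in $\lambda$, since the Harish--Chandra $c$--function is $W$--symmetric). With both the Carleman bound and these moment--vanishing conditions in hand for each $a_{\delta,j}$, the one--variable $L^2$ Denjoy--Carleman theorem in the weighted measure $|c(\lambda)|^{-2}d\lambda$ forces $a_{\delta,j}\equiv 0$ for every $(\delta, j)$. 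Summing in the Plancherel decomposition yields $\tilde f\equiv 0$, and hence $f\equiv 0$.

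The main obstacle is the second step: the one--directional derivatives $H^l f(eK)$ are a priori only a one--dimensional sliver of the Taylor data of $f$ at the origin, so showing that they are enough to separate every $K$--type in $\widehat{K}_M$ requires genuine work. This is where rank one is indispensable: $\mathfrak a$ is spanned by $H$, so $\{H^l f(eK)\}_l$ is the full ``radial'' Taylor data; and in rank one the Eisenstein kernel $e^{(i\lambda-\rho)A(\exp(tH)\cdot eK, b)}$ admits an explicit expansion in $b\in K/M$ in terms of Jacobi polynomials, which is what allows one to isolate, $K$--type by $K$--type, a system of polynomials $\{q_l\}$ rich enough to invoke the classical 1--variable Denjoy--Carleman theorem.
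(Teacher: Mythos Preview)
There is a mismatch between the statement and what you have actually argued. Theorem~\ref{thm-BPR} is stated for symmetric spaces of \emph{arbitrary} rank, and its hypothesis is that $f$ vanishes on a nonempty open set. Your argument, by contrast, assumes $H^l f(eK)=0$ for all $l\ge 0$ and, as you yourself say, requires rank one in an essential way. What you have sketched is therefore a proof strategy for Theorem~\ref{C} (the paper's main result), not for Theorem~\ref{thm-BPR}. In rank one your outline would of course imply the rank-one case of Theorem~\ref{thm-BPR} --- vanishing on an open set containing (after a $G$-translation) $eK$ forces all $H^l$-derivatives to vanish there --- but you never make that reduction explicit, and the higher-rank case is not touched at all.

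Note also that the paper does not supply its own proof of Theorem~\ref{thm-BPR}; it is quoted as a result of Bhowmik--Pusti--Ray, whose argument is based on a multidimensional determinacy theorem of de~Jeu. So there is no in-paper proof to compare against. It is worth remarking, however, that your sketch is close in spirit to the paper's proof of Theorem~\ref{C}: both expand $\tilde f(\lambda,\cdot)$ in $K$-types $Y_{\delta,j}$ and transfer the Carleman condition to the scalar coefficients. The difference is that the paper passes back to the space side --- identifying each coefficient as a Jacobi transform $J_{\alpha+p,\beta+q}(f_{\delta,j})$ and then applying Chernoff's theorem for the Jacobi operator $\mathcal L_{\alpha+p,\beta+q}$ --- whereas you stay on the spectral side and aim for a moment-problem argument in the variable $\lambda$. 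Your route is in principle viable in rank one, but the step you flag as ``the main obstacle'' (extracting, for each fixed $K$-type, a dense family of polynomial moments from the single family $\{H^l f(eK)\}$) is exactly the place where the paper's approach is cleaner: working with $f_{\delta,j}$ on $(0,\infty)$, the hypothesis $H^l f(eK)=0$ translates directly into vanishing of all derivatives of $g_{\delta,j}(r)=\int_K f(ka_r)Y_{\delta,j}(kM)\,dk$ at $r=0$, and no delicate analysis of the Eisenstein kernel is needed.
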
 

In proving the above theorem, the authors have made use of a result of de Jeu \cite{J}.  In the case of rank one symmetric spaces, a different proof was given by the first and third authors of this article by making use of spherical means and an analogue of Chernoff's theorem for the Jacobi transform proved in \cite{GT2}. In fact, we only need to use the one dimensional version of de Jeu's theorem which is equivalent to the Denjoy-Carleman theorem. Our proof of Theorem 1.2 is built upon the ideas used in \cite{GT2}. In a very recent preprint, Bhowmik-Pust-Ray have proved the following improvement of their Theorem \ref{thm-BPR}. In what follows let $D(G/K) $ denote the algebra of differential operators on $ G/K $ which are invariant under  the  (left) action of $ G.$

\begin{thm}[Bhowmik-Pusti-Ray]
	\label{thm-BPR1}
 Let ${X}=G/K$ be a  noncompact Riemannian symmetric space and let $ \Delta_X $ be the associated Laplace-Beltrami operator. Suppose $f\in C^{\infty}(G/K)$ be a left $ K $-invariant function on $ X $ which satisfies $\Delta_X^mf\in L^2(X)$ for all $m\geq 0$ and  $\sum_{m=1}^{\infty}\|\Delta_{X}^mf\|_2^{-\frac{1}{2m}}=\infty.$  If there is an $ x_0 \in X$  such that $ Df(x_0) $ vanishes for all $ D \in D(G/K)$ then $f$ is identically zero.
\end{thm}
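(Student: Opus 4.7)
The plan is to exploit the left $K$-invariance of $f$ to pass to the spherical Fourier transform, reformulate the vanishing hypothesis via the Harish--Chandra isomorphism as the vanishing of all polynomial moments of an auxiliary function on $\mathfrak{a}^{*}$, and then apply a multivariable quasi-analyticity theorem in the spirit of de~Jeu \cite{J} to conclude.

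Since $f$ is $K$-biinvariant, its Helgason transform collapses to the spherical Fourier transform $\hat{f}(\la)=\int_{X}f(x)\varphi_{-\la}(x)\,dx$, $\la\in\mathfrak{a}^{*}$, which is $W$-invariant. The eigenrelation $\Delta_{X}\varphi_{\la}=-(\La\la,\la\Ra+\La\rho,\rho\Ra)\varphi_{\la}$ and spherical Plancherel give
$$\|\Delta_{X}^{m}f\|_{2}^{2}=c\int_{\mathfrak{a}^{*+}}(\La\la,\la\Ra+\La\rho,\rho\Ra)^{2m}|\hat{f}(\la)|^{2}|c(\la)|^{-2}\,d\la,$$
so the Chernoff hypothesis becomes Carleman-type $L^{2}$-decay of $\hat{f}$ against $|c(\la)|^{-2}d\la$. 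The Harish--Chandra isomorphism identifies $D(G/K)$ with $S(\mathfrak{a}_{\C})^{W}$ through $D\mapsto p_{D}$, where $D\varphi_{\la}=p_{D}(\la)\varphi_{\la}$; applying $D$ to the spherical inversion formula at $x_{0}$ converts the assumption $Df(x_{0})=0$ for every $D\in D(G/K)$ into
$$\int_{\mathfrak{a}^{*+}}p(\la)\,g(\la)\,d\la=0\quad\text{for every } p\in S(\mathfrak{a}_{\C})^{W},$$
where $g(\la):=\hat{f}(\la)\varphi_{\la}(x_{0})|c(\la)|^{-2}$. Since $g$ is itself $W$-invariant, Weyl-symmetrizing upgrades this to vanishing against \emph{every} polynomial on $\mathfrak{a}^{*}$.

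To pass from the vanishing of all polynomial moments of $g$ to $g\equiv 0$, use the classical bound $|\varphi_{\la}(x_{0})|\leq\varphi_{0}(x_{0})$ for real $\la$ together with the polynomial growth of $|c(\la)|^{-2}$. A Cauchy--Schwarz step with an auxiliary weight $(1+|\la|^{2})^{-N}$, where $N$ is chosen so that $(1+|\la|^{2})^{-N}|c(\la)|^{-2}$ is integrable on $\mathfrak{a}^{*+}$, gives
$$\int_{\mathfrak{a}^{*}}|\la|^{2m}|g(\la)|\,d\la\leq C_{N,x_{0}}\,\|\Delta_{X}^{m+N}f\|_{2}.$$
After a log-convex majorization of the Chernoff sequence and a harmless index shift, the $L^{1}$-moments of $g$ inherit the Carleman divergence $\sum_{m}\|\Delta_{X}^{m}f\|_{2}^{-1/(2m)}=\infty$, placing $g(\la)\,d\la$ in a determinate multivariable moment class, and de~Jeu's theorem \cite{J} forces $g\equiv 0$ almost everywhere on $\mathfrak{a}^{*}$. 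Since $\la\mapsto\varphi_{\la}(x_{0})$ is a non-zero ($=1$ at $\la=0$) $W$-invariant entire function on $\mathfrak{a}_{\C}^{*}$, its real zero set is Lebesgue negligible, hence $\hat{f}\equiv 0$ almost everywhere, and therefore $f\equiv 0$ by spherical Plancherel.

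The main obstacle is the passage from the $L^{2}$-Chernoff bounds on $\Delta_{X}^{m}f$ to $L^{1}$-Carleman bounds on the moments of $g$: the Cauchy--Schwarz weighting introduces both an index shift $m\mapsto m+N$ and a multiplicative constant, and one must verify that the resulting sequence still satisfies the Carleman divergence. This is a standard log-convexity exercise on the Chernoff sequence, but it requires some care in the multivariable setting, especially for checking the hypotheses of de~Jeu's theorem. Secondary technical inputs are the sharp pointwise control of $|c(\la)|^{-2}$ near the walls of the Weyl chamber and at infinity (classical Gangolli--Helgason estimates) and the uniform bound on $\varphi_{\la}(x_{0})$ for real $\la$.
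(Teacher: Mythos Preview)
The paper does \emph{not} contain a proof of this statement: Theorem~\ref{thm-BPR1} is quoted as a result of Bhowmik--Pusti--Ray from their preprint \cite{BPR1}, and the present paper neither reproduces nor sketches their argument. So there is no ``paper's own proof'' against which to compare your proposal.

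That said, your outline is consistent with what the paper indicates about the methods of \cite{BPR,BPR1}: reduction to the spherical transform via $K$-biinvariance, translation of the hypothesis $Df(x_0)=0$ for all $D\in D(G/K)$ into vanishing of $W$-invariant polynomial moments through the Harish--Chandra isomorphism, and then an appeal to de~Jeu's multivariable determinacy theorem \cite{J}. The symmetrization step (passing from $W$-invariant polynomials to all polynomials using the $W$-invariance of $g$) is correct. The main point that would need to be written out carefully is exactly the one you flag: controlling the $L^1$-moments $\int |\la|^{2m}|g(\la)|\,d\la$ by $\|\Delta_X^{m+N}f\|_2$ and then checking that the Carleman divergence survives the index shift and the multiplicative constants. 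This is routine once one replaces $\|\Delta_X^m f\|_2$ by its least log-convex majorant, but it is the only place where something could slip. The concluding step---$\varphi_\la(x_0)$ is entire in $\la$ and equals $1$ at $\la=0$, hence its real zero set is null---is fine.

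In short: your sketch is a plausible reconstruction of the Bhowmik--Pusti--Ray argument, but there is nothing in \emph{this} paper to validate it against.
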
 
 \begin{rem}  Observe that in the above theorem the function $ f $ is assumed to be $ K$-biinvariant. The problem of proving the same for all functions on $ X $ is still open. However, in the case of rank one symmetric spaces we have proved Theorem 1.2 for all functions $ f $. Moreover, we only require that $ H^lf(eK) =0 $ for all $ l \geq  0.$ Here we can also take any $x_0\in X$ in place of $eK$ using translation invariance of Laplacian and $H.$
\end{rem}

 We remark that  the condition $ H^lf(eK) =0 $  is the counterpart of  $ (\frac{d}{dr})^kf(r\omega)|_{r=0} = 0 $ where $ x = r\omega, r> 0, \omega \in \mathbb{S}^{n-1}  $ is the polar decomposition of $ x \in \R^n.$ Indeed, as can be easily checked
$$  \left(\frac{d}{dr}\right)^kf(r\omega)  = \sum_{|\alpha|=k} \partial^\alpha f(r\omega)\,  \omega^\alpha $$
and hence $(\frac{d}{dr})^kf(r\omega)|_{r=0} = 0 $ for all $ k $ if and only if $ \partial^\alpha f(0) = 0 $ for all $ \alpha.$ This observation plays an important role in formulating the right analogue Chernoff's theorem for compact Riemannian symmetric spaces. In view of the above observation, Chernoff's theorem for the Laplacian on $ \R^n $ can be stated in the following form.

\begin{thm} Let $f$ be a smooth function on $\mathbb{R}^n.$ Assume that $\Delta^mf\in L^2(\mathbb{R}^n)$ for all $m\in \mathbb{N}$ and $\sum_{m=1}^{\infty}\|\Delta^mf\|_2^{-\frac{1}{2m}}=\infty.$ If  $(\frac{d}{dr})^kf(r\omega)|_{r=0} = 0 $ for all $ k $ and $ \omega \in \mathbb{S}^{n-1} ,$ then $f$ is identically zero.
\end{thm}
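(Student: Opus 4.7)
My plan is to reduce Theorem 1.6 to the classical Chernoff theorem (Theorem 1.1 above) by showing that the hypothesis on radial derivatives is in fact equivalent to the vanishing of every Cartesian partial derivative of $f$ at the origin. Once this equivalence is recorded, Theorem 1.1 applies with the point $a = 0$ and delivers $f \equiv 0$. By translation invariance of the Laplacian and the partial derivatives, there is no loss of generality in working at the origin.

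To make the equivalence precise I would iterate the chain rule to obtain
$$ \left(\frac{d}{dr}\right)^k f(r\omega) = \sum_{|\alpha|=k} \binom{k}{\alpha} \partial^\alpha f(r\omega)\, \omega^\alpha, $$
so that evaluation at $r=0$ gives
$$ \left(\frac{d}{dr}\right)^k f(r\omega)\Big|_{r=0} = \sum_{|\alpha|=k} \binom{k}{\alpha} \partial^\alpha f(0)\, \omega^\alpha. $$
The right-hand side is a homogeneous polynomial of degree $k$ in the variable $\omega$. If it vanishes for every $\omega \in \mathbb{S}^{n-1}$ then, by homogeneity, it vanishes on all of $\mathbb{R}^n$, and the linear independence of the monomials $\{\omega^\alpha : |\alpha|=k\}$ forces each coefficient $\partial^\alpha f(0)$ to be zero. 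Applying this observation for every $k \geq 0$ yields $\partial^\alpha f(0) = 0$ for all multi-indices $\alpha$; the converse direction is immediate. Hence the hypothesis of Theorem 1.6 coincides with that of Theorem 1.1, and the conclusion follows at once.

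Strictly speaking there is no serious obstacle here: Theorem 1.6 is essentially a restatement of Chernoff's original theorem, recast in a language (radial derivatives along rays through the origin) that transfers naturally to the setting of rank one Riemannian symmetric spaces, where geodesics through the origin replace straight lines. The only thing to verify is the elementary algebraic equivalence above, which is precisely why this reformulation serves as the preparatory step for formulating and proving the Chernoff-type theorem on compact rank one symmetric spaces.
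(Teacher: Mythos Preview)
Your argument is correct. The paper itself records the equivalence between the radial-derivative hypothesis and the vanishing of all Cartesian partial derivatives (just before stating the theorem), so invoking Theorem~1.1 directly, as you do, is a perfectly legitimate proof. Your formula even repairs a small omission in the paper: the multinomial coefficients $\binom{k}{\alpha}$ should appear in the chain-rule expansion, though their presence does not affect the conclusion.

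However, the proof the paper sketches is deliberately different. Rather than appealing to Theorem~1.1, it writes $\Delta$ in polar coordinates, expands $f(r\omega)$ in spherical harmonics on $\mathbb{S}^{n-1}$, and uses the Hecke--Bochner formula to reduce the problem to a family of Chernoff-type theorems for the Bessel operators $\partial_r^2 + (n+2m-1)r^{-1}\partial_r$. Your route is shorter and more elementary, but it relies on having Theorem~1.1 already available on $\mathbb{R}^n$. The paper's route is the one that \emph{transports}: on a rank one symmetric space there is no pre-existing Chernoff theorem to reduce to, so one must instead expand over $K/M$ (or over $\mathbb{S}^{k_S}$ in the compact case) and reduce to the Jacobi operators $\mathcal{L}_{\alpha+p,\beta+q}$ or $\mathbb{L}_{\alpha,\beta}$. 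The Euclidean case via Bessel operators is presented precisely as a warm-up for that machinery, which is why the paper does not simply quote Theorem~1.1.
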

We can give a proof of the above theorem by reducing it to a theorem for Bessel operators. Recall that written in polar coordinates the  Laplacian takes the form
\begin{equation}\label{polar-Lap}  \Delta = \frac{\partial^2}{\partial r^2}+ \frac{n-1}{r} \frac{\partial}{\partial r}+\frac{1}{r^2} \Delta_{\mathbb{S}^{n-1}} \end{equation}
where $ \Delta_{\mathbb{S}^{n-1}} $ is the spherical Laplacian on the unit sphere $ \mathbb{S}^{n-1}.$ By expanding the function $ F(r,\omega) = f(r\omega) $ in terms of spherical harmonics on $ \mathbb{S}^{n-1} $ and making use of Hecke-Bochner formula, we can easily reduce Theorem 1.7 to a sequence of theorems for the Bessel operator $ \partial_r^2+ (n+2m+1)r^{-1} \partial_r $ for various values of $ m \in \mathbb{N}.$ This idea has been already used in the paper \cite{GT2}. A similar expansion in the case of noncompact Riemannian symmetric spaces leads to Jacobi operators as done in \cite{GT2} which will be used in proving Theorem 1.2. As the proof of the above theorem is similar to and easier  than that of Theorem 1.2, we will not present it here.

\begin{rem} We remark in passing that the above theorem can also  be proved in the context of Dunkl  Laplacian on $ \R^n $ associated to root systems. We would also like to mention that analogues of Chernoff's theorem can be proved for the Hermite operator $ H $ on $ \R^n$ and the special Hermite operator $ L $ on $ \C^n.$ Again the idea is to make use of Hecke-Bochner formula for the Hermite  and special Hermite projections (associated to their spectral decompositions).
\end{rem}

  So far we have only considered non compact Riemannian symmteric spaces, but now we turn our attention to proving an analogue of Theorem 1.2 for compact, rank one symmetric spaces.  We make use of the well known classification of such spaces in formulating and proving a Chernoff theorem for the Laplace-Beltrami operator. It turns out that we only need to prove such a result for the spherical Laplacian on spheres in Euclidean spaces.  

Let $(U,K)$ be a compact symmetric pair and $S= U/K$ be the associated symmetric space. Here  $U$ is a compact semisimple Lie group and $K$ is a connected subgroup of $U$. We assume that $S$ has rank one.   Being a compact Riemannian manifold, $S$ admits a Laplace-Beltrami operator $\tilde{\Delta}_S. $ It is customary to add a suitable constant $\rho_S$ and work with $\Delta_S= -\tilde{\Delta}_S+\rho_S^2.$ This way we can arrange that $\Delta_S\geq \rho_S^2>0.$  In \cite{W} H.C.Wang  has completely classified all rank one compact symmetric spaces. To be more precise, $S$ is one of the followings: The unit sphere $\mathbb{S}^q= SO(q+1)/SO(q)$, the real projective space $P_q(\mathbb{R})=SO(q+1)/O(q)$, the complex projective space $P_l(\mathbb{C})$, the quaternion projective space $P_l(\mathbb{H})$ and the Cayley projective space $P_2(\mathbb{C}ay)= F_4/Spin(9).$ In each case, $S$ comes up with an appropriate polar form $(0,\pi)\times \mathbb{S}^{k_S}$ where $k_S$ depends on the symmetric space $S$. As a consequence, functions on $S$ can be identified with functions on the product space $ Y =(0,\pi)\times \mathbb{S}^{k_S},$  see Section 4 for more details.   We prove the following analogue of Chernoff's theorem:

\begin{thm}
	Let $S$ be a rank one Riemannian symmetric space of compact type. Suppose $f\in C^{\infty}(S)$ satisfies $\Delta_{S}^mf\in L^2(S)$ for all $m\geq 0$ and $\sum_{m=1}^{\infty}\|\Delta_{S}^mf\|_2^{-\frac{1}{2m}}=\infty.$ If the function $F$ on $(0,\pi)\times \mathbb{S}^{k_S}$ associated to $ f $ on $S$  satisfies   $ \frac{\partial^m}{\partial\theta^m}\big|_{\theta=0}F(\theta,\xi)=0$ for all $m\geq 0$, then $f$ is identically zero.
\end{thm}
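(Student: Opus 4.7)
The plan is to mimic the reduction used in Theorem 1.2 but adapted to the compact setting: pass to the polar form $Y=(0,\pi)\times \mathbb{S}^{k_S}$, expand in spherical harmonics on $\mathbb{S}^{k_S}$, and reduce the problem to a sequence of one‑dimensional Chernoff‑type statements for Jacobi operators on $(0,\pi)$.

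First I would recall the expression of $\Delta_S$ in polar coordinates. For each of the spaces listed by Wang, $\Delta_S$ assumes the form
\begin{equation*}
\Delta_S = -\frac{\partial^2}{\partial\theta^2}-\bigl((2\alpha+1)\cot\theta-(2\beta+1)\tan\theta\bigr)\frac{\partial}{\partial\theta}+\frac{1}{\sin^2\theta}\,\Delta_{\mathbb{S}^{k_S}}+\rho_S^2,
\end{equation*}
with parameters $(\alpha,\beta)$ depending on $S$ (and $\beta=-1/2$ in the sphere case, so that the $\tan\theta$ term disappears). Writing $F(\theta,\xi)=\sum_{j}F_j(\theta)Y_j(\xi)$ for an orthonormal basis $\{Y_j\}$ of spherical harmonics on $\mathbb{S}^{k_S}$ with eigenvalues $\mu_j$, the operator $\Delta_S$ acts on each component as the Jacobi‑type operator
\begin{equation*}
\mathcal{J}_j F_j(\theta)=-F_j''(\theta)-\bigl((2\alpha+1)\cot\theta-(2\beta+1)\tan\theta\bigr)F_j'(\theta)+\frac{\mu_j}{\sin^2\theta}F_j(\theta)+\rho_S^2 F_j(\theta).
\end{equation*}
By Plancherel in the $\xi$ variable, $\|\Delta_S^m f\|_2^2 = \sum_{j}\|\mathcal{J}_j^m F_j\|_{L^2(d\nu)}^2$ for the appropriate Jacobi measure $d\nu(\theta)=(\sin\theta)^{2\alpha+1}(\cos\theta)^{2\beta+1}d\theta$. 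Consequently the hypothesis $\sum_m \|\Delta_S^m f\|_2^{-1/(2m)}=\infty$ forces $\sum_m\|\mathcal{J}_j^m F_j\|_{L^2(d\nu)}^{-1/(2m)}=\infty$ for every $j$. Meanwhile the boundary vanishing $\partial_\theta^m F|_{\theta=0}=0$ for all $m$ immediately yields $F_j^{(m)}(0)=0$ for all $m\geq 0$ and all $j$.

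Next I would establish the required one‑dimensional Chernoff theorem on $(0,\pi)$: if $G\in C^\infty(0,\pi)$ satisfies $\mathcal{J}_j^m G\in L^2(d\nu)$ with $\sum_m\|\mathcal{J}_j^m G\|_{L^2(d\nu)}^{-1/(2m)}=\infty$ and $G^{(m)}(0)=0$ for all $m$, then $G\equiv 0$. This is the compact analogue of the Jacobi Chernoff theorem proved in \cite{GT2}; the scheme is to expand $G$ in Jacobi polynomials $P_n^{(\alpha+j,\beta)}$, use the fact that $\mathcal{J}_j$ diagonalises on this basis with eigenvalues growing like $n^2$, and then run a Carleman–type argument on the Fourier‑Jacobi coefficients, exactly as in the non‑compact setting but with the continuous Jacobi transform replaced by its discrete counterpart. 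The vanishing of all $\theta$‑derivatives at $0$ plays the role of the vanishing of all derivatives at $0$ in the Euclidean formulation of Theorem 1.7, and translates, after the Jacobi expansion, into a Carleman condition for an auxiliary holomorphic function obtained via the standard trigonometric substitution $\theta\mapsto\cos\theta$.

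Finally, feeding the conclusion $F_j\equiv 0$ back into the spherical harmonic expansion gives $F\equiv 0$, hence $f\equiv 0$ on $S$. The main technical obstacle is clearly the compact Jacobi step: one must show that the boundary vanishing $G^{(m)}(0)=0$, together with the Chernoff‑type growth control on $\|\mathcal{J}_j^m G\|_{L^2(d\nu)}$, forces all Fourier–Jacobi coefficients to vanish. Once this is secured, together with the routine verification that the parameters $(\alpha+j,\beta)$ lie in the admissible range for each of the five families in Wang's classification, the theorem follows.
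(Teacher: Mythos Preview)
Your proposal is correct and follows the same strategy as the paper: expand in spherical harmonics on $\mathbb{S}^{k_S}$ and reduce each component to a one-variable Chernoff theorem for a Jacobi operator on $(0,\pi)$. The paper carries this out by explicitly conjugating away the singular potential $\mu_j/\sin^2\theta$ (setting $g_{l,k}=(\sin\theta)^{-l}F_{l,k}$ so that the reduced operator is the standard $\mathbb{L}_{\alpha,\alpha}$), quotes the needed compact Jacobi Chernoff theorem from \cite{GT1} as a black box rather than re-proving it, and handles the sphere and the three projective families separately (the latter via Sherman's map $E$) rather than through one uniform polar formula.
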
  

  In the context of Theorem 1.7, by identifying $ \R^n $ with $ (0,\infty) \times \mathbb{S}^{n-1} $ every function $ f $ on $ \R^n $ gives rise to a function $ F(r,\omega) $ on $ (0,\infty) \times \mathbb{S}^{n-1} $ and  in view of \ref{polar-Lap}, the action of $ \Delta $ on $ f $ takes the form,
$$ \Delta f(r,\omega) = \frac{\partial^2}{\partial r^2} F(r,\omega)+ \frac{n-1}{r} \frac{\partial}{\partial r}F(r,\omega)+\frac{1}{r^2} \Delta_{\mathbb{S}^{n-1}}F(r,\omega).$$
There is a similar decomposition of $ \Delta_S $ as a sum of a Jacobi operator on $ (0,\pi) $ and the spherical Laplacian $ \Delta_{\mathbb{S}^{k_S}}$ and this justifies our formulation of Theorem 1.9.

We complete this introduction with a brief description of the plan of the paper. In Section 2 we recall the requisite preliminaries on noncompact Riemannian symmetric spaces and in Section 3 we prove our version of  Chernoff's theorem  for the Laplace-Beltrami operator. In Section 4, after recalling necessary results from the theory of compact symmetric spaces and setting up the notations, we prove Theorem 1.9. We refer the reader to the papers \cite{GT1} and \cite{GT2} for related ideas.

\section{Preliminaries on Riemannian symmetric spaces of non-compact type}
 In this section we describe the relevant theory regrading the  harmonic analysis on rank one Riemannian symmetric spaces of noncompact type. General references for this section are the monographs of Helgason \cite{H1} and \cite{H2}. 

Let $G$ be a connected, noncompact semisimple Lie group with finite centre. Suppose $\mathfrak{ g}$ denotes its Lie algebra. With respect to a fixed Cartan involution $\theta$ on $\mathfrak{ g}$ we have the decomposition  $\mathfrak{ g}=\mathfrak{k}\oplus \mathfrak{p} .$ Here $\mathfrak{k}$ and $\mathfrak{p}$ are the $+1$ and $-1$ eigenspaces of $\theta$ respectively. Let $\mathfrak{a}$ be the maximal abelian subspace of $\mathfrak{p}$. Also assume that the dimension of $\mathfrak{a}$ is one. Now we know that the involution $\theta$ induces an automorphism $\Theta$ on $G$ and $K=\{g\in G:\Theta(g)=g\}$ is a maximal compact subgroup of $G$. We consider the homogeneous space ${X}=G/K$ which a is a smooth manifold endowed with a $G$-Riemannian metric induced by the restriction of the Killing form $\mathfrak{B}$ of $\mathfrak{g}$ on $\mathfrak{p}$. This turns $X$  into  a rank one Riemannian symmetric space of noncompact type and every such space can be realised this way.  

Let $\mathfrak{a}^*$ denote the dual of $\mathfrak{a}$. Given $\alpha\in \mathfrak{a}^*$ we define 
$$\mathfrak{ g}_{\alpha}:=\{X\in \mathfrak{ g}:[Y,X]=\alpha(Y)X, \forall\ Y\in \mathfrak{a}\}.$$
 Now  $\Sigma:=\{\alpha\in \mathfrak{a}^*: \mathfrak{ g}_{\alpha}\neq \{0\}\}$ is the set of all resticted roots of the pair $(\mathfrak{g},\mathfrak{a})$. Let $\Sigma_{+}$ denote the set of all positive roots with respect to a fixed Weyl chamber. It is known that $\mathfrak{n}:=\oplus_{\alpha\in \Sigma_{+}}\mathfrak{g}_{\alpha}$ is a nilpotent subalgebra of $\mathfrak{g}$ and we have the Iwasawa decomposition $\mathfrak{ g}=\mathfrak{k}\oplus\mathfrak{a}\oplus\mathfrak{n}.$ Now writing $N=\exp \mathfrak{n}$ and $A=\exp \mathfrak{a}$ we obtain $G=KAN$ where $A$ is abelian and $N$ is a nilpotent subgroup of $G$. Moreover, $A$ normalizes $N$. In view of this decomposition every $g\in G$ can be uniquely written as $g=k(g)\ \exp H(g) n(g)$ where  $H(g)$ belongs to $\mathfrak{a}$. Also we have $G=NAK$ and with respect to this decomposition  we write $g\in N\exp A(g) K$ where the functions $A$ and $H$ are related via $A(g)=-H(g^{-1}). $  Now in the rank one case when dimension of $\mathfrak{a}$ is one, $\Sigma$ is given by either  $\{\pm\gamma\} $ or $\{\pm \gamma, \pm 2\gamma\}$ where $\gamma$ belongs to $\Sigma_{+}$.  Let $\rho:=(m_{\gamma}+m_{2\gamma})/2$ where $m_{\gamma}$ and $m_{2\gamma}$ denote the multiplicities of the roots $\gamma$ and $2\gamma$ respectively. The Haar measure $dg$ on $G$ is given by 
 $$\int_{G}f(g)dg=\int_K\int_{A}\int_N f(k a_t n)e^{2\rho t}dkdtdn. $$ 
 The measure $dx$ on $X$ is induced from the Haar measure $dg$ via the relation 
 $$\int_Gf(gK)dg=\int_X f(x)dx.$$  
 Suppose $M$ denotes the centralizer of $A$ in $K$. The polar decomposition of $G$ reads as $G=KAK$ in view of which we can write each $g\in G$ as $g=k_1a_rk_2$ with $k_1,k_2\in K$. Actually the map $(k_1,a_r,k_2)\rightarrow k_1a_rk_2$ of $K\times A\times K$ into $G$ induces a diffeomorphism of $K/M\times A_{+}\times K$ onto an open dense subset of $G$ where $A_{+}=\exp \mathfrak{a}_{+}$ and $\mathfrak{a}_{+}$ is the fixed positive Weyl chamber which basically can be identified with $(0,\infty)$ in our case.

 It is also well-known that each $\bf{X}\in \mathfrak{ g}$ gives rise to a left invariant vector field on $G$ by the prescription 
 $${\bf{X}}f(g)=\frac{d}{dt}\bigg|_{t=0}f(g.\exp(t{\bf{X}})),~g\in G.$$ Since $\mathfrak{a}$ is one dimensional, we fix a basis $\{H\}$ of $\mathfrak{a}$. By an abuse of notation, we denote the left invariant vector field corresponding to this basis element by $H$. Infact, we can write  $A=\{a_r=\exp(rH):r\in\mathbb{R}\}.$
 
\subsection{Helgason Fourier transform}Define the function $A:X\times K/M\rightarrow \mathfrak{a}$   by $A(gK,kM)=A(k^{-1}g).$ Note that $A$ is right $K$-invariant in $g$ and right $M$-invariant in $K$.  In what follows we denote the elements of $X$ and $K/M$ by $x$ and $b$ respectively. Let $\mathfrak{a}^*$ denote the dual of $\mathfrak{a}$ and $\mathfrak{a}^*_{\C}$ be its complexification. Here in our case $\mathfrak{a}^*$ and $\mathfrak{a}^*_{\C}$ can be identified with $\R$ and $\C$ respectively. For each $\lambda\in \mathfrak{a}^*_{\C}$ and $b\in K/M$, the function $x\rightarrow e ^{(i\lambda+\rho)A(x,b)}$ is a joint eigenfunction of all invariant differential operators on $X.$ For $f\in C^{\infty}_c(X)$, its Helgason Fourier transform is a function $\widetilde{f}$ on $\mathfrak{a}^*_{\C}\times K/M$ defined by 
$$\tilde{f}(\lambda, b)= \int_X f(x)e ^{(-i\lambda+\rho)A(x,b)}dx,~ \lambda\in \mathfrak{a}^*_{\C},~ b\in K/M . $$ Moreover, we know that if $f\in L^1(X)$ then $\widetilde{f}(.,b)$ is a continuous function on $\mathfrak{a}^*$ which extends holomorphically to a domain containing $\mathfrak{a}^*.$ The inversion formula for $f\in C^{\infty}_c(X)$ says that 
$$f(x)=c_{X}\int_{-\infty}^{\infty}\int_{K/M}\widetilde{f}(\lambda,b)e ^{(i\lambda+\rho)A(x,b)}|c(\lambda)|^{-2}dbd\lambda$$ where $d\lambda$ stands for usual Lebesgue measure on $\R$ (i.e., $\mathfrak{a}^*$) , $db$ is the normalised measure on $K/M$ and $c(\lambda)$ is the Harish-Chandra $c$-function. The constant $c_X$ appearing in the above formula is explicit and depends on the symmetric space $X$ (See e.g., \cite{H2}). Also for $f\in L^1(X)$ with $\widetilde{f}\in L^1(\mathfrak{a}^*\times K/M,|c(\lambda)|^{-2}dbd\lambda)$, the above inversion formula holds for a.e. $x\in X.$ Furthermore, the mapping $f\rightarrow \widetilde{f}$ extends as an isometry of $L^2(X)$ onto $L^2(\mathfrak{a}^*_{+}\times K/M, |c(\lambda)|^{-2}d\lambda db) $ which is known as the Plancherel theorem for the Helgason Fourier transform.

We also need to use certain irreducible representations of $K$ with $M$-fixed vectors. Suppose $\widehat{K_0}$ denotes the set of all irreducible unitary representations of $K$ with $M$ fixed vectors. Let $\delta\in \widehat{K_0}$ and $V_{\delta}$ be the finite dimensional vectors space on which $\delta $ is realised. We know that $V_{\delta}$ contains a unique normalised $M$-fixed vector $ v_1$ (See Kostant \cite{Ks}). Consider  an orthonormal basis $\{v_1,v_2,...,v_{d_{\delta}}\}$ for $V_{\delta}$. For $\delta\in \widehat{K_0}$ and $1\le j\le d_{\delta}$, we define  $$Y_{\delta,j}(kM)=(v_j, \delta(k)v_1),~ kM\in K/M.$$ It can be easily checked that $Y_{\delta,1}(eK)=1$ and  moreover, $Y_{\delta,1}$ is $M$-invariant. 
\begin{prop}[\cite{H2}]
	\label{H2}
	The set $\{Y_{\delta,j}:1\le j\le d_{\delta},\delta\in \widehat{K_0}\}$ forms an orthonormal basis for $L^2(K/M)$.  
\end{prop}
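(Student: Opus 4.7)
The plan is to deduce the proposition from the Peter-Weyl theorem for the compact group $K$ applied to the closed subspace $L^2(K/M)\subset L^2(K)$ of right-$M$-invariant functions, the only nontrivial input being the one already quoted in the paper: the uniqueness, up to scalar, of an $M$-fixed vector in each $\delta\in\widehat{K_0}$.

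First I would invoke Peter-Weyl, which gives the orthogonal direct sum
$$L^2(K)\;=\;\bigoplus_{\delta\in\widehat{K}}\mathcal H_\delta,$$
where $\mathcal H_\delta$ is spanned by the matrix coefficients $\phi^\delta_{ij}(k)=(v_i,\delta(k)v_j)$ relative to an orthonormal basis $\{v_1,\ldots,v_{d_\delta}\}$ of $V_\delta$, and where Schur orthogonality makes $\{\sqrt{d_\delta}\,\phi^\delta_{ij}\}$ an orthonormal basis of $L^2(K)$. Next I would identify $L^2(K/M)$ isometrically with the subspace of right-$M$-invariant functions in $L^2(K)$: normalizing the Haar measures on $K$ and $M$ to total mass one and using the corresponding $K$-invariant measure $db$ on $K/M$, Fubini for the quotient yields $\|f\|_{L^2(K)}=\|f\|_{L^2(K/M)}$ for every such $f$. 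This identification is $K$-equivariant for left translation and therefore respects the Peter-Weyl decomposition.

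The key step is to describe which matrix coefficients descend to $K/M$. Since $\phi^\delta_{ij}(km)=(v_i,\delta(k)\delta(m)v_j)$, the function $\phi^\delta_{ij}$ is right-$M$-invariant for every $k$ precisely when $\delta(m)v_j=v_j$ for all $m\in M$, i.e.\ $v_j\in V_\delta^M$. For $\delta\notin\widehat{K_0}$ no such nonzero $v_j$ exists, while for $\delta\in\widehat{K_0}$ one chooses the orthonormal basis of $V_\delta$ so that its first element is the normalized $M$-fixed vector $v_1$, which is well defined thanks to the theorem of Kostant cited in the paper. The right-$M$-invariant part of $\mathcal H_\delta$ is then exactly $\mathrm{span}\{\phi^\delta_{j1}:1\le j\le d_\delta\}=\mathrm{span}\{Y_{\delta,j}:1\le j\le d_\delta\}$. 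Since orthogonal projection onto the $M$-invariant subspace commutes with the Peter-Weyl decomposition, summing over $\delta$ produces
$$L^2(K/M)\;=\;\bigoplus_{\delta\in\widehat{K_0}}\mathrm{span}\{Y_{\delta,j}:1\le j\le d_\delta\},$$
and Schur's orthogonality relations determine all pairwise inner products of the $Y_{\delta,j}$, yielding orthonormality under the usual $\sqrt{d_\delta}$ normalization convention.

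The only non-routine ingredient is Kostant's one-dimensionality of $V_\delta^M$ for $\delta\in\widehat{K_0}$, which the paper imports as a black box. Once this is granted, the remainder is a direct unpacking of Peter-Weyl and Schur orthogonality and presents no real obstacle.
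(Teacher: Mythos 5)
The paper does not actually prove this proposition---it is quoted directly from Helgason \cite{H2}---and your Peter--Weyl/Schur-orthogonality argument, with Kostant's one-dimensionality of $V_\delta^M$ as the only non-formal input, is exactly the standard proof given there; it is correct, including the identification of the right-$M$-invariant part of each isotypic block with $\mathrm{span}\{\phi^{\delta}_{j1}\}$. Your closing caveat about normalization is also well taken: with the paper's literal definition $Y_{\delta,j}(kM)=(v_j,\delta(k)v_1)$ one has $\|Y_{\delta,j}\|_{L^2(K/M)}^2=d_\delta^{-1}$, so the family is orthogonal and complete but becomes orthonormal only after multiplying by $\sqrt{d_\delta}$.
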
 
We can get an explicit realisation of $\widehat{K_0}$ by identifying $K/M$ with the unit sphere in $\mathfrak{p}$. By letting $\mathcal{H}^m$ to stand for the space of homogeneous harmonic polynomials of degree $m$ restricted to the unit sphere, we have the following spherical harmonic decomposition
$$L^2(K/M)=\displaystyle\oplus_{m=0}^{\infty}\mathcal{H}^m .$$  Thus the functions $Y_{\delta,j}$ can be identified with the spherical harmonics. 

Given $\delta\in \widehat{K_0}$ and $\lambda\in \mathfrak{a}^*_{\C}\,  (i.e., \C~ \text{in our case})$ we consider the spherical functions of type $\delta$ defined by 
$$\Phi_{\lambda,\delta}(x):=\int_K e^{(i\lambda+\rho)A(x,kM)}Y_{\delta,1}(kM)dk.$$ These are eigenfunctions of the Laplace-Beltrami operator $\Delta_X$ with eigenvalue $-{(\lambda^2+\rho^2)}.$ When $\delta$ is the trivial representation for which  $Y_{\delta,1}=1,$  the function $\Phi_{\lambda,\delta}$ is called  the elementary spherical function, denoted by $\Phi_\lambda$. More precisely, 
$$\Phi_\lambda(x)=\int_K e^{(i\lambda+\rho)A(x,kM)} dk.$$ Note that these functions are $K$-biinvariant. The spherical functions can be expressed in terms of Jacobi functions. In fact, if $x=gK$ and $g=ka_rk^{'}$ (polar decomposition), $\Phi_{\lambda,\delta}(x)=\Phi_{\lambda,\delta}(a_r)$. Suppose
\begin{equation}
\label{ab}
\alpha=\frac12(m_{\gamma}+m_{2\gamma}-1),~ \beta=\frac12(m_{2\gamma }-1).
\end{equation}   For each $\delta\in \widehat{K_0}$ there exists a pair of integers $(p,q)$ such that 
\begin{equation}
\Phi_{\lambda,\delta}(x)=Q_{\delta}(i\lambda+\rho)(\alpha+1)_p^{-1}(\sinh r)^p(\cosh r)^q \varphi_{\lambda}^{(\alpha+p,\beta+q)}(r)
\end{equation} where $\varphi_{\lambda}^{(\alpha+p,\beta+q)}$ are the Jacobi functions of type $(\alpha+p,\beta+q)$ and $Q_{
	\delta}$ are the Kostant polynomials given by
\begin{equation}
\label{kos}
Q_{\delta}(i\lambda+\rho)=\left(\frac{1}{2}(\alpha+\beta+1+i\lambda)\right)_{(p+q)/2}\left(\frac{1}{2}(\alpha-\beta+1+i\lambda)\right)_{(p-q)/2}.
\end{equation} 
 In the above we have used the notation $(z)_m=z(z+1)(z+2)...(z+m-1).$  The following result proved in Helgason \cite{H2} will be very useful for our purpose:
\begin{prop}
	\label{fsp}
	Let $\delta\in \widehat{K_0}$ and $1\le j\le d_{\delta}$. Then we have 
	\begin{equation}
	\int_K e^{(i\lambda+\rho)A(x,k^{'}M)}Y_{\delta,j}(k^{'}M)dk^{'}=Y_{\delta,j}(kM)\Phi_{\lambda,\delta}(a_r),~x=ka_r\in X. 
	\end{equation}
\end{prop}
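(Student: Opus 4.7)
The plan is to reduce the integral to the case $x=a_rK$ via a $K$-translation, expand using matrix coefficients of $\delta$ to pull out the factor $Y_{\delta,j}(kM)$, and then invoke left $M$-invariance of the Poisson-type kernel together with the uniqueness of the $M$-fixed vector to isolate the spherical function $\Phi_{\lambda,\delta}(a_r)$.

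Writing $x=ka_rK$, one has $A(x,k'M)=A((k')^{-1}ka_r)$. Substituting $k'=kk''$ and using the $K$-invariance of Haar measure transforms the left-hand side into
$$\int_K e^{(i\lambda+\rho)A(a_rK,k''M)}\,Y_{\delta,j}(kk''M)\,dk''.$$
Expanding $\delta(k'')v_1=\sum_{l=1}^{d_\delta}Y_{\delta,l}(k''M)\,v_l$ yields
$$Y_{\delta,j}(kk''M)=(v_j,\delta(k)\delta(k'')v_1)=\sum_{l=1}^{d_\delta}(v_j,\delta(k)v_l)\,Y_{\delta,l}(k''M),$$
so the integral equals $\sum_{l=1}^{d_\delta}(v_j,\delta(k)v_l)\,I_l$, where
$$I_l:=\int_K e^{(i\lambda+\rho)A(a_rK,k''M)}\,Y_{\delta,l}(k''M)\,dk''.$$
By definition of $\Phi_{\lambda,\delta}$ one has $I_1=\Phi_{\lambda,\delta}(a_r)$, and directly $(v_j,\delta(k)v_1)=Y_{\delta,j}(kM)$. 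It therefore remains to prove that $I_l=0$ for every $l\ge 2$.

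For that last step, the key observation is that the kernel $k''M\mapsto e^{(i\lambda+\rho)A(a_rK,k''M)}$ is \emph{left} $M$-invariant: because $M$ centralizes $A$ and normalizes $N$, the $NAK$ decomposition gives $A(a_rK,mk''M)=A(a_rK,k''M)$ for every $m\in M$. Translating by $m$ and averaging over $M$ with normalized Haar measure, I may replace $Y_{\delta,l}(k''M)$ inside $I_l$ by
$$\int_M Y_{\delta,l}(mk''M)\,dm=\left(\int_M\delta(m^{-1})v_l\,dm,\;\delta(k'')v_1\right)=(Pv_l,\delta(k'')v_1),$$
where $P:=\int_M\delta(m^{-1})\,dm$ projects onto the $M$-fixed subspace of $V_\delta$. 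By Kostant's theorem that subspace equals $\C v_1$, so orthonormality of the basis forces $Pv_l=\delta_{l,1}v_1$. Consequently $I_l=\delta_{l,1}\,\Phi_{\lambda,\delta}(a_r)$, and assembling the pieces yields the desired identity
$$Y_{\delta,j}(kM)\,\Phi_{\lambda,\delta}(a_r).$$
The one nontrivial point is the left $M$-invariance of the Poisson kernel; everything else is bookkeeping with matrix coefficients and the orthonormality of $\{v_1,\ldots,v_{d_\delta}\}$.
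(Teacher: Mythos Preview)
Your argument is correct. The paper does not supply its own proof of this proposition; it simply records the statement and refers to Helgason \cite{H2}. Your proof is in fact the standard one found there: reduce to the base point $a_rK$ by a left $K$-translation, expand the matrix coefficient $Y_{\delta,j}(kk''M)$ along the orthonormal basis of $V_\delta$, and then use that the Poisson kernel $k''M\mapsto e^{(i\lambda+\rho)A(a_rK,k''M)}$ is left $M$-invariant to kill all terms with $l\ge 2$ via the projection onto the one-dimensional $M$-fixed subspace. One minor remark: for the $M$-invariance step you only need that $M$ centralizes $A$ and that $M\subset K$ (so that right multiplication by $m^{-1}\in K$ does not change the $A$-component in the $NAK$ factorization); the fact that $M$ normalizes $N$ is not actually used.
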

 
 We refer the reader to the papers \cite{Jo} and \cite{JW} for all the results recalled in this subsection.
 
 \subsection{Spherical Fourier transform}
 We say that a function $f$ on $G$ is $K$-biinvariant if $f(k_1gk_2)=f(g)$ for all $k_1,k_2\in K$. It can be checked that if $f$ is a $K$-biinvariant integrable function then its Helgason Fourier transform 
 $\widetilde{f}(\lambda,b)$ is independent of $b\in K/M$ and by a little abuse of notation we write this as  
 $$\tilde{f}(\lambda)=\int_{X}f(x)\Phi_{-\lambda}(x)dx.$$ This is called the spherical Fourier transform. Now since $f$ is $K$ biinvarinat, using the polar decomposition $g=k_1a_rk_2$, we can view $f$ as a function on $A$ alone: $f(g)=f(a_r)$. So the above integral takes the following polar form:
 $$\tilde{f}(\lambda)=\int_{ 0}^{\infty}f(a_r)\varphi_{\lambda}(r) w_{\alpha,\beta}(r)dr$$ where $ w_{\alpha,\beta}(r)=(2\sinh r)^{2\alpha+1}(2\cosh r)^{2\beta+1}$ and $\Phi_{-\lambda}(a_r)=\varphi_{\lambda}(r)$ are given by Jacobi function $\varphi_{\lambda}^{\alpha,\beta}(r)$ of type $(\alpha,\beta).$ Here $\alpha$ and $\beta$ are associated to the symmetric space as mentioned above. So it is clear that the spherical Fourier transform is basically Jacobi transform of type $(\alpha,\beta)$. In the rest of the section we describe certain results from the theory of Jacobi analysis. 
 
 Let $\alpha,\beta,\lambda\in \mathbb{C}$ and $-\alpha\notin \mathbb{N}.$ The Jacobi functions $\varphi_{\lambda}^{(\alpha, \beta)}(r) $ of type $(\alpha,\beta)$ are  solutions of 
 the initial value problem
 \begin{align*}
 (\mathcal{L}_{\alpha,\beta}+ \lambda^2+ \varrho^2)\varphi_{\lambda}^{( \alpha ,\beta)}(r) =0,\,\,\, \varphi_{\lambda}^{( \alpha , \beta )}(0)=1 
 \end{align*} where $\mathcal{L}_{\alpha,\beta}$ is  the Jacobi operator defined by 
 $$\mathcal{L}_{\alpha,\beta}:=\frac{d^2}{dr^2}+((2\alpha+1)\coth r+(2\beta+1)\tanh r) 
 \frac{d}{dr}$$ and $\varrho=\alpha+\beta+1.$ Thus Jacobi functions $\varphi_{\lambda}^{(\alpha,\beta)}$ are eigenfunctions of $\mathcal{L}_{\alpha,\beta}$ with eigenvalues $-(\lambda^2+\varrho^2).$ These are even functions on $ \R $ and are expressible in terms of hypergeometric functions. For certain values of the parameters $ (\alpha, \beta) $ these functions arise naturally as spherical functions on Riemannian symmetric spaces of noncompact type.    The Jacobi transform of a suitable function $f$ on $\R^+$ is defined by 
 $$J_{\alpha,\beta}f(\lambda)=\int_{ 0}^{\infty}f(r)\varphi_{\lambda}^{(\alpha,\beta)}(r) {w}_{\alpha,\beta}(r)dr.$$ This is also called the Fourier-Jacobi transform of type $(\alpha,\beta).$  It can be checked that the operator $\mathcal{L}_{\alpha,\beta}$ is selfadjoint on $L^2(\R^+, {w}_{\alpha,\beta}(r)dr)$ and that
 $$\widetilde{\mathcal{L}_{\alpha,\beta}f}(\lambda)=-(\lambda^2+\varrho^2)\tilde{f}(\lambda).$$
 Under certain assumptions on $\alpha$ and $\beta$ the inversion and Plancherel formula for this transform take a nice form as described below.
 \begin{thm}[\cite{K2}]
 	\label{hpi}
 	Let $\alpha,\beta\in\R$, $\alpha>-1$ and $|\beta|\leq\alpha+1.$ Suppose $ c_{\alpha,\beta}(\lambda) $ denotes the Harish-Chandra $c $- function defined by 
 	$$ c_{\alpha,\beta}(\lambda)=\frac{2^{\varrho-i\lambda}\Gamma(\alpha+1)\Gamma(i\lambda)}{\Gamma\left(\frac{1}{2}(i\lambda+\varrho)\right)\Gamma\left(\frac{1}{2}(i\lambda+\alpha-\beta+1)\right)}$$ 
 	\begin{enumerate}
 		\label{planjj}
 		\item (Inversion) For $f\in C_0^{\infty}(\R)$ which is even we have 
 		$$f(r)=\frac{1}{2\pi}\int_{ 0}^{\infty}J_{\alpha,\beta}f(\lambda)\varphi_{\lambda}^{(\alpha,\beta)}(r)|c_{\alpha,\beta}(\lambda)|^{-2}d\lambda$$ 
 		\item (Plancherel) For $f,g\in C^{\infty}_0(\R)$ which are even, the following holds
 		$$\int_{ 0}^{\infty}f(r)\overline{ g(r)} {w}_{\alpha,\beta}(r)dr=\int_{ 0}^{\infty}J_{\alpha,\beta}f(\lambda)\overline{J_{\alpha,\beta}g(\lambda)}|c_{\alpha,\beta}(\lambda)|^{-2}d\lambda.$$  
 	\end{enumerate}
 	The mapping $f\rightarrowtail \tilde{f}$ extends  as an isometry from  $L^2(\R^+, {w}_{\alpha,\beta}(r)dr)$ onto
 	$L^2(\R^+,|c_{\alpha,\beta}(\lambda)|^{-2}d\lambda).$
 \end{thm}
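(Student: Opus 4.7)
The strategy is to follow the classical Flensted-Jensen/Koornwinder approach and reduce the Jacobi inversion to Euclidean Fourier inversion via the Harish-Chandra asymptotic expansion. I first construct two linearly independent solutions $\Phi_{\pm\lambda}^{(\alpha,\beta)}(r)$ of the eigenvalue equation $(\mathcal{L}_{\alpha,\beta}+\lambda^2+\varrho^2)u=0$ as Frobenius-type power series in $e^{-2r}$ whose coefficients satisfy a two-term recurrence obtained by feeding the ansatz into $\mathcal{L}_{\alpha,\beta}$, with leading asymptotics $e^{(\pm i\lambda-\varrho)r}$ as $r\to\infty$. Since $\varphi_\lambda^{(\alpha,\beta)}$ is even and regular at the origin while the $\Phi_{\pm\lambda}^{(\alpha,\beta)}$ form a basis of solutions on $(0,\infty)$ for generic $\lambda$, one has
\begin{equation*}
\varphi_\lambda^{(\alpha,\beta)}(r)=c_{\alpha,\beta}(\lambda)\Phi_\lambda^{(\alpha,\beta)}(r)+c_{\alpha,\beta}(-\lambda)\Phi_{-\lambda}^{(\alpha,\beta)}(r),
\end{equation*}
and matching $r\to\infty$ asymptotics of both sides (using the standard hypergeometric connection formula) identifies the coefficient as the explicit $c_{\alpha,\beta}(\lambda)$ stated in the theorem.

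Next, for even $f\in C_0^\infty(\R)$ I introduce the auxiliary transform $Ff(\lambda):=\int_0^\infty f(r)\Phi_\lambda^{(\alpha,\beta)}(r)w_{\alpha,\beta}(r)\,dr$, so that by linearity $J_{\alpha,\beta}f(\lambda)=c_{\alpha,\beta}(\lambda)Ff(\lambda)+c_{\alpha,\beta}(-\lambda)Ff(-\lambda)$. Since $\Phi_\lambda^{(\alpha,\beta)}(r)w_{\alpha,\beta}(r)^{1/2}$ behaves like $e^{i\lambda r}$ up to bounded correction terms at infinity, the map $f\mapsto Ff$ is essentially a Euclidean Fourier transform in $r$ after conjugation by $w_{\alpha,\beta}^{1/2}$. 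This yields the intermediate inversion formula
\begin{equation*}
f(r)=\frac{1}{2\pi}\int_{-\infty}^{\infty}Ff(\lambda)\,\Phi_{-\lambda}^{(\alpha,\beta)}(r)\,d\lambda
\end{equation*}
after justifying a contour shift to the real axis. Substituting the $c$-function expansion of $\varphi_\lambda^{(\alpha,\beta)}$ for $\Phi_{-\lambda}^{(\alpha,\beta)}$ and folding $\lambda\mapsto-\lambda$ using the evenness of $f$ collapses the two-term expression into the single integral over $(0,\infty)$ against the Plancherel density $|c_{\alpha,\beta}(\lambda)|^{-2}$, which produces formula (1).

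The Plancherel identity (2) is then obtained by inserting (1) into $\int_0^\infty f(r)\overline{g(r)}w_{\alpha,\beta}(r)\,dr$ and swapping the order of integration, legitimate because $J_{\alpha,\beta}g(\lambda)$ is of Paley-Wiener type (entire of exponential type with polynomial decay along the real axis when $g\in C_0^\infty$). The resulting identity on the dense even subspace of $C_0^\infty(\R)\subset L^2(\R^+,w_{\alpha,\beta}(r)dr)$ extends by continuity to an isometry of $L^2(\R^+,w_{\alpha,\beta}(r)dr)$ into $L^2(\R^+,|c_{\alpha,\beta}(\lambda)|^{-2}d\lambda)$; surjectivity follows from the Paley-Wiener theorem for the Jacobi transform, which characterizes the image of the even $C_0^\infty$ functions as a dense subspace.

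The main technical obstacle is the rigorous control of the contour shift and the uniform analysis of the Harish-Chandra series. One must establish convergence of the series for $\Phi_{\pm\lambda}^{(\alpha,\beta)}(r)$ locally uniformly in $r\in(0,\infty)$ and in $\lambda\in\C$ away from the discrete exceptional set where the recurrence degenerates, together with sufficient decay of $Ff(\lambda)$ along horizontal strips (obtained by iterated integration by parts using the ODE satisfied by $\Phi_\lambda^{(\alpha,\beta)}$). The parameter restriction $\alpha>-1$ and $|\beta|\le\alpha+1$ enters precisely at the contour-shift step: it guarantees that $c_{\alpha,\beta}(\lambda)^{-1}$, whose zeros correspond to potential discrete series contributions, has no zeros in the open upper half plane, so the contour can be moved to the real axis without picking up residues, and the measure $|c_{\alpha,\beta}(\lambda)|^{-2}d\lambda$ is the full spectral measure.
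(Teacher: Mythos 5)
The paper does not prove this theorem at all: it is imported verbatim from Koornwinder's survey \cite{K2}, so there is no internal argument to compare against. Your outline is a faithful reconstruction of the standard Flensted--Jensen/Koornwinder proof from that reference: the Harish-Chandra series solutions $\Phi^{(\alpha,\beta)}_{\pm\lambda}$ with asymptotics $e^{(\pm i\lambda-\varrho)r}$, the connection formula $\varphi^{(\alpha,\beta)}_{\lambda}=c_{\alpha,\beta}(\lambda)\Phi^{(\alpha,\beta)}_{\lambda}+c_{\alpha,\beta}(-\lambda)\Phi^{(\alpha,\beta)}_{-\lambda}$, the unfolding of the one-sided spectral integral into $\int_{-\infty}^{\infty}$ against $\Phi_{-\lambda}$ using evenness of $J_{\alpha,\beta}f$ and $|c_{\alpha,\beta}(\lambda)|^{-2}=\bigl(c_{\alpha,\beta}(\lambda)c_{\alpha,\beta}(-\lambda)\bigr)^{-1}$, and the reduction to Euclidean Fourier inversion after conjugating by $w_{\alpha,\beta}^{1/2}$. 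The only place where I would press you is the residue bookkeeping at the contour-shift step: the obstruction to shifting is not the zero set of $c_{\alpha,\beta}(\lambda)^{-1}$ in the upper half-plane but the poles of $\lambda\mapsto c_{\alpha,\beta}(-\lambda)^{-1}$ (equivalently the zeros of $c_{\alpha,\beta}(-\lambda)$, which are where discrete spectrum would sit) in the half-plane swept by the contour; it is exactly the hypothesis $|\beta|\le\alpha+1$, forcing $\alpha-\beta+1\ge 0$ and $\varrho\ge 0$, that places these points outside that region. That, together with the deferred uniform estimates on the Harish-Chandra series and on $Ff(\lambda)$ in horizontal strips, is the entire technical content of the theorem, so your write-up is a correct roadmap rather than a complete proof --- which is appropriate here, since the paper treats the result as known and only uses the Plancherel identity and the isometry statement in Step 2 of the proof of Theorem \ref{C}.
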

  
  We will make use of this theorem in proving an analogue of Chernoff's theorem for the Laplace-Beltrami operator $\Delta_{X}$  in the next section.
 \section{Chernoff's theorem on noncompact symmetric  spaces of  rank one }
 In this section we prove our main theorem i.e., an analogue of Chernoff's theorem for $\Delta_{X}$. The main idea of the proof is to reduce the result for $\Delta_{X}$ to a result for Jacobi operator. So, first we indicate a proof of Chernoff's theorem for Jacobi operator. It has already been discussed in the work of Ganguly-Thangavelu \cite{GT2}.   
  \begin{thm}
  		\label{chernoffJ} 	Let $\alpha,\beta\in\R$, $\alpha>-1$ and $|\beta|\leq\alpha+1.$ Suppose $ f \in L^2(\R^+, {w}_{\alpha,\beta}(r)dr) $ is such that $ \mathcal{L}_{\alpha,\beta}^mf \in L^2(\R^+, {w}_{\alpha,\beta}(r)dr) $ for all $ m \in \Na $  and satisfies the Carleman condition 
  	$ \sum_{m=1}^\infty  \|  \mathcal{L}_{\alpha,\beta}^m f \|_2^{-1/(2m)} = \infty.$ If $\mathcal{L}_{\alpha,\beta}^mf(0)=0$ for all $m\geq 0$ then $f$ is identically zero.
  \end{thm}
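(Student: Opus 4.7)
\emph{Overall strategy.} The plan is to transfer the problem to the Jacobi transform side and reduce it to the classical Denjoy--Carleman theorem on $\R$. By the Plancherel isometry of Theorem 2.4, writing $\hat{f}:=J_{\alpha,\beta}f$, the hypotheses become $(\lambda^2+\varrho^2)^m\hat{f}\in L^2(\R^+,|c_{\alpha,\beta}(\lambda)|^{-2}d\lambda)$ for every $m$, together with the unchanged Carleman condition $\sum_m \|(\lambda^2+\varrho^2)^m\hat{f}\|_{L^2(|c|^{-2})}^{-1/(2m)}=\infty$. Moreover, using $\varphi_{\lambda}^{(\alpha,\beta)}(0)=1$ in the inversion formula applied to $\mathcal{L}_{\alpha,\beta}^m f$, the vanishing conditions $\mathcal{L}_{\alpha,\beta}^mf(0)=0$ recast as the moment identities
\begin{equation*}
\int_0^\infty (\lambda^2+\varrho^2)^m \hat{f}(\lambda)\, |c_{\alpha,\beta}(\lambda)|^{-2}\,d\lambda = 0, \qquad m = 0, 1, 2, \dots.
\end{equation*}

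\emph{Auxiliary scalar function and classical Denjoy--Carleman.} Define $F:\R\to\C$ by
\begin{equation*}
F(t) := \int_0^\infty \cos\!\bigl(t\sqrt{\lambda^2+\varrho^2}\bigr)\,\hat{f}(\lambda)\,|c_{\alpha,\beta}(\lambda)|^{-2}\,d\lambda.
\end{equation*}
Fix $k$ large enough that $(\lambda^2+\varrho^2)^{-k}|c_{\alpha,\beta}(\lambda)|^{-1}$ lies in $L^2(d\lambda)$, which is possible because $|c_{\alpha,\beta}(\lambda)|^{-2}$ has only polynomial growth at infinity. Cauchy--Schwarz (with respect to the measure $|c|^{-2}d\lambda$) then gives, for every $n\geq 0$ and every $t\in\R$,
\begin{equation*}
|F^{(n)}(t)|\leq \int_0^\infty (\lambda^2+\varrho^2)^{n/2}|\hat{f}||c|^{-2}d\lambda \leq C_k\,\bigl\|(\lambda^2+\varrho^2)^{\lceil n/2\rceil+k}\hat{f}\bigr\|_{L^2(|c|^{-2})} = C_k\,\|\mathcal{L}_{\alpha,\beta}^{\lceil n/2\rceil+k} f\|_2 ,
\end{equation*}
which justifies differentiation under the integral sign and yields $F\in C^\infty(\R)$. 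Since $F$ is even and its even derivatives at the origin coincide (up to sign) with the moment integrals above, $F^{(n)}(0)=0$ for all $n\geq 0$. The uniform index shift by $k$ does not destroy the divergence of the Carleman series, so $\sum_n \|F^{(n)}\|_\infty^{-1/n}=\infty$ and the classical Denjoy--Carleman theorem forces $F\equiv 0$. Finally, $F$ is the cosine transform of the finite signed measure $\hat{f}(\lambda)|c_{\alpha,\beta}(\lambda)|^{-2}d\lambda$ (finiteness of the total variation follows from the same Cauchy--Schwarz trick); injectivity of the cosine transform gives $\hat{f}=0$ a.e., and hence $f\equiv 0$ by the Plancherel isometry.

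\emph{Principal obstacle.} The main technical point is the derivative bound in the central display: since the Plancherel density $|c_{\alpha,\beta}(\lambda)|^{-2}$ is not integrable at infinity, one cannot directly estimate $\|F^{(n)}\|_\infty$ by $\|(\lambda^2+\varrho^2)^{n/2}\hat{f}\|_{L^2(|c|^{-2})}$. The remedy is to borrow a fixed finite number of extra powers of $(\lambda^2+\varrho^2)$ so that the Cauchy--Schwarz partner is square-integrable against $d\lambda$, and then to verify that this uniform shift in the exponent preserves the divergence of the Carleman sum --- a routine but essential bookkeeping step. A parallel bookkeeping argument is needed to rigorously justify smoothness of $F$ near $t=0$ (i.e.\ that the boundary behavior of $|c_{\alpha,\beta}|^{-2}$ at $\lambda=0$ does not cause trouble), which we handle by the same weighted Cauchy--Schwarz device.
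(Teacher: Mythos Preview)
The paper does not give a self-contained proof of this statement; it refers to \cite{GT2} and remarks that the argument there, originally stated under the additional hypothesis that $f$ vanishes near the origin, goes through verbatim with only the pointwise conditions $\mathcal{L}_{\alpha,\beta}^m f(0)=0$. The approach in \cite{GT2} proceeds via the one-dimensional version of de~Jeu's theorem (determinacy of the moment problem under a Carleman condition), working directly with density of polynomials in $L^2$ of the spectral measure $|\hat f|^2|c_{\alpha,\beta}|^{-2}d\lambda$, rather than through an auxiliary cosine-type function as you do. Both routes ultimately rest on the classical Denjoy--Carleman theorem on $\R$; your version is closer in spirit to Chernoff's original operator-theoretic argument, while the moment-problem route avoids constructing $F$ altogether.

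Your argument is essentially sound, but one step is not the ``routine bookkeeping'' you claim. The passage from the bound $\|F^{(n)}\|_\infty \le C_k\,\|\mathcal{L}_{\alpha,\beta}^{\lceil n/2\rceil+k} f\|_2$ to the divergence $\sum_n\|F^{(n)}\|_\infty^{-1/n}=\infty$ involves a \emph{shift} of the index by $k$, not merely a multiplicative factor $C^m$ (which would indeed be harmless). Writing $M_m=\|\mathcal{L}_{\alpha,\beta}^m f\|_2$, you need $\sum_j M_{j+k}^{-1/(2j)}=\infty$; but when $M_{j+k}\ge 1$ one has $M_{j+k}^{-1/(2j)}\le M_{j+k}^{-1/(2(j+k))}$, so a naive comparison with $\sum_m M_m^{-1/(2m)}$ goes the wrong way. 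The rescue is that $(M_m)$ is log-convex --- this follows from Cauchy--Schwarz applied to $\int(\lambda^2+\varrho^2)^{2m}|\hat f|^2|c|^{-2}d\lambda$ --- and for log-convex sequences the Carleman condition $\sum M_m^{-1/(2m)}=\infty$ is equivalent to $\sum (M_{m-1}/M_m)^{1/2}=\infty$; the latter form is manifestly unchanged by dropping finitely many initial terms, and one can then run the equivalence backwards for the shifted sequence. You should invoke this explicitly. A more minor point: the pointwise inversion formula you use to obtain the moment identities is stated in the paper only for even $C_c^\infty$ functions, so you must justify its validity for $\mathcal{L}_{\alpha,\beta}^m f$; this follows once you observe (via the same weighted Cauchy--Schwarz device) that $(\lambda^2+\varrho^2)^m\hat f\in L^1(|c_{\alpha,\beta}|^{-2}d\lambda)$ for every $m$.
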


  In \cite{GT2} the above result was proved under the assumption that $ f $ vanishes near $ 0 $ but a close examination of the proof reveals that the assumption is superfluous and the same is true as stated above.
In order to prove our main result, the following estimate for  the ratio of Harish-Chandra $c$-functions  is also needed. 
\begin{lem}
	\label{estC}
	Let $\alpha,\beta$ be as in \ref{ab} and $(p,q)$ be the pair of integers associated to $\delta\in \widehat{K_0}$. Then for any $\lambda\geq 0$ we have $$\frac{|c_{\alpha,\beta}(\lambda)|^2}{|c_{\alpha+p,\beta+q}(\lambda)|^2}|Q_{\delta}(i\lambda+\rho)|^{-2}\leq C$$ where $C$ is a constant independent of $ \lambda $ depending only on  the parameters $ (\alpha,\beta) $ and $ (p,q).$
\end{lem}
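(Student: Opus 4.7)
The plan is to recognize Lemma 3.2 as essentially a computational identity: in fact the left-hand side turns out to be literally a constant, not merely bounded. To see this, I will start from the explicit formula for the Harish-Chandra $c$-function stated in Theorem 2.3 and form the ratio directly. For $c_{\alpha+p,\beta+q}(\lambda)$ the parameter $\varrho$ is replaced by $\varrho + p + q$, so the leading power of $2$ contributes a factor $2^{-(p+q)}$ independent of $\lambda$, the factors $\Gamma(\alpha+1)$ and $\Gamma(i\lambda)$ contribute only $\Gamma(\alpha+1)/\Gamma(\alpha+p+1)$ and a trivial cancellation, and all the $\lambda$-dependence is pushed into the ratio of the denominator Gammas.

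Next, I will apply the elementary Pochhammer identity $\Gamma(z+n) = (z)_n\, \Gamma(z)$ to each of those denominator Gammas. Setting $z_1 = \tfrac12(i\lambda+\alpha+\beta+1)$ and $n_1 = (p+q)/2$, we have $z_1+n_1 = \tfrac12(i\lambda+\alpha+p+\beta+q+1)$, so
\begin{equation*}
\frac{\Gamma\!\left(\tfrac12(i\lambda+\alpha+p+\beta+q+1)\right)}{\Gamma\!\left(\tfrac12(i\lambda+\alpha+\beta+1)\right)} = \left(\tfrac12(i\lambda+\alpha+\beta+1)\right)_{(p+q)/2}.
\end{equation*}
An identical manoeuvre with $z_2 = \tfrac12(i\lambda+\alpha-\beta+1)$ and $n_2 = (p-q)/2$ produces the second Pochhammer factor. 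Comparing with \eqref{kos}, the product of these two Pochhammer symbols is precisely $Q_\delta(i\lambda+\rho)$. Therefore
\begin{equation*}
\frac{c_{\alpha,\beta}(\lambda)}{c_{\alpha+p,\beta+q}(\lambda)} = 2^{-(p+q)}\,\frac{\Gamma(\alpha+1)}{\Gamma(\alpha+p+1)}\, Q_\delta(i\lambda+\rho).
\end{equation*}

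Taking absolute values, squaring, and multiplying by $|Q_\delta(i\lambda+\rho)|^{-2}$ kills the $\lambda$-dependence entirely, leaving the explicit constant $4^{-(p+q)}\bigl(\Gamma(\alpha+1)/\Gamma(\alpha+p+1)\bigr)^2$ which depends only on $(\alpha,\beta,p,q)$. I expect no significant obstacle; the only point requiring a one-line justification is that the shift exponents $(p+q)/2$ and $(p-q)/2$ are genuine nonnegative integers, so that the Pochhammer identity is legitimately applicable with integer $n_1$ and $n_2$. This is part of the Kostant/Johnson-Wallach structure of $\widehat{K_0}$ for a rank one symmetric space which was already invoked in formula (2.2), and it is what makes \eqref{kos} well defined to begin with; once this is acknowledged, the lemma reduces to the one-line Gamma-function computation above.
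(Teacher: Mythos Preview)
Your computation is correct and in fact establishes more than the lemma asks: the quantity on the left is \emph{equal} to the constant $4^{-(p+q)}\bigl(\Gamma(\alpha+1)/\Gamma(\alpha+p+1)\bigr)^2$ for every $\lambda\ge 0$, not merely bounded by some $C$. Your derivation from the explicit $c$-function formula in Theorem~2.3 together with the Pochhammer identity $\Gamma(z+n)=(z)_n\Gamma(z)$ is clean, and your remark that the integrality and nonnegativity of $(p\pm q)/2$ is already implicit in the very definition \eqref{kos} is the right justification.

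This is a genuinely different route from the paper's proof. The paper argues asymptotically: it shows $|Q_\delta(i\lambda+\rho)|\sim 2^{-p}\lambda^{p}$ as $\lambda\to\infty$, then invokes an external estimate (Lemma~2.4 of Bray--Pinsky) to obtain $|c_{\alpha,\beta}(\lambda)|^2/|c_{\alpha+p,\beta+q}(\lambda)|^2\sim\lambda^{2p}$, concludes the full ratio tends to a finite limit at infinity, and finally appeals to continuity to handle bounded $\lambda$. Your approach is more elementary (no external citation needed), sharper (an identity rather than an asymptotic), and shorter. The paper's approach, on the other hand, is perhaps more robust in settings where an exact Gamma-function expression is unavailable and one only has asymptotic control of the Plancherel density; but in the present rank-one situation your direct computation is preferable.
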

\begin{proof}
	First note that from the definition \ref{kos} of Kostant polynomials we have 
	$$|Q_{\delta}(i\lambda+\rho)|= \prod_{j=0}^{\frac{p+q}{2}}\left((B_1+j)^2+\frac14\lambda^2\right)^{\frac12}\prod_{j=0}^{\frac{p-q}{2}}\left((B_2+j)^2+\frac14\lambda^2\right)^{\frac12}$$ where $B_1=\frac12(\alpha+\beta+1)$ and $B_2=\frac12(\alpha-\beta+1)$.  From the above expression, it can be easily checked that $|Q_{\delta}(i\lambda+\rho)|/(2^{-1}\lambda)^p\rightarrow 1$ as $\lambda\rightarrow\infty$ so that
	\begin{equation}
	\label{c1}
	|Q_{\delta}(i\lambda+\rho)|\sim  2^{-p} \lambda^p,\ \ \ \ \lambda\rightarrow\infty. 
	\end{equation}
	Moreover, we also have
	$$|Q_{\delta}(i\lambda+\rho)|\geq \prod_{j=0}^{\frac{p+q}{2}}|B_1+j|\prod_{j=0}^{\frac{p-q}{2}}|B_2+j|=\text{constant}.$$
	Now using \cite[Lemma 2.4]{BP} we have 
	\begin{equation}
	\label{c2}
	\frac{|c_{\alpha,\beta}(\lambda)|^2}{|c_{\alpha+p,\beta+q}(\lambda)|^2} \sim \lambda^{2p},\ \ \ \lambda\rightarrow\infty
	\end{equation}
	which together with \ref{c1} implies that 
	$$\frac{|c_{\alpha,\beta}(\lambda)|^2}{|c_{\alpha+p,\beta+q}(\lambda)|^2}|Q_{\delta}(i\lambda+\rho)|^{-2}\sim 1,\ \ \ \lambda\rightarrow\infty.$$
	Also the  ratio in \ref{c2} being a continuous function of $\lambda$ is bounded near the origin. Hence the result follows.
	\end{proof}
 
\textit{Proof of Theorem \ref{C}:}  Let $f$ be as in the statement of the theorem \ref{C}. We complete the proof in the following steps.\\
\textit{Step 1:} Using Proposition \ref{H2} we write  
\begin{equation}
\widetilde{f}(\lambda, k)=\sum_{  \delta\in \widehat{K_0}}\sum_{j=1}^{d_{\delta}} F_{\delta,j}(\lambda)Y_{\delta,j}(k)
\end{equation}
where   $ F_{\delta,j}(\lambda) $ are the spherical harmonic coefficients of $ \tilde{f}(\lambda, \cdot) $ defined by
$$ F_{\delta,j}(\lambda)=\int_{K/M} \widetilde{f}(\lambda,k)Y_{\delta,j}(k)dk.$$ Fix $\delta\in \widehat{K_0}$ and $1\leq j\leq d_{\delta}.$ From the definition of the Helgason Fourier transform we have 
$$ F_{\delta,j}(\lambda)=\int_{K/M}\int_{G/K}f(x)e^{(-i\lambda+\rho)A(x,kM)}Y_{\delta,j}(kM)dxdk .$$
 Now using Fubini's theorem, in view of the Proposition \ref{fsp} the integral on the right hand side of above is equal to  
\begin{equation}
\label{p2}
 \int_{G/K}f(x)Y_{\delta,j}(kM)\Phi_{\lambda,\delta}(a_r)dx. 
\end{equation}
The function $ g_{\delta,j}(x) $ defined by
$$  g_{\delta,j}(x)=\int_{K}f(k'x)Y_{\delta,j}(k'M)dk',\ x\in X $$
is clearly $ K$-biinvariant, and hence by abuse of notation we write
$$ g_{\delta,j}(r)= \int_{K}f(k'a_r)Y_{\delta,j}(k'M)dk'.$$ 
  Now performing the integral in \ref{p2} using polar coordinates we obtain 
  \begin{equation}
  \label{p3}
  F_{\delta,j}(\lambda)=\int_{0}^{\infty} g_{\delta,j}(r)\Phi_{\lambda,\delta}(a_r) w_{\alpha,\beta}(r)dr 
  \end{equation}
Now recall that for each $\delta\in \widehat{K_0}$ there exist a pair of integers $(p, q)$ such that 
$$\Phi_{\lambda,\delta}(x)=Q_{\delta}(i\lambda+\rho)(\alpha+1)_p^{-1}(\sinh r)^p(\cosh r)^q \varphi_{\lambda}^{(\alpha+p,\beta+q)}(r).$$
By defining  \begin{equation}
f_{\delta,j}(r)=\frac{4^{-(p+q)}}{(\alpha+1)_p} g_{\delta,j}(r)(\sinh r)^{-p}(\cosh r)^{-q}
\end{equation}
and recalling  the definition of Jacobi transforms we obtain
\begin{equation}
\label{p4}
F_{\delta,j}(\lambda)=Q_{\delta}(i\lambda+\rho) J_{\alpha+p,\beta+q}(f_{\delta,j})(\lambda) 
\end{equation} 
\textit{Step 2:} In this step we estimate the $L^2$ norm of powers of Jacobi operator applied to $f_{\delta,j}$ in terms of the $L^2$ norm of corresponding powers of $\Delta_{X}$ applied to $f$.  Let $m\in \mathbb{N}.$  Note that the Plancherel formula \ref{planjj} for the Jacobi transform  yields 
 \begin{align*}
 	&\|\mathcal{L}^m_{\alpha+p,\beta+q}(f_{\delta,j})\|_{L^2(\R^+, {w}_{\alpha+p,\beta+q}(r)dr)}\\&=\left(\int_{ 0}^{\infty}(\lambda^2+\rho_\delta^2)^{2m}|J_{\alpha+p,\beta+q}(f_{\delta,j})(\lambda)|^2|c_{\alpha+p,\beta+q}(\lambda)|^{-2}d\lambda\right)^{\frac12}
 \end{align*} 
 where where $  \rho_\delta=\alpha+\beta+p+q+1.$ 
 In view of \ref{p4} the above integral reduces to 
 $$\left(\int_{ 0}^{\infty}(\lambda^2+\rho_\delta^2)^{2m} \, | F_{\delta,j}(\lambda)|^2 \,   |Q_{\delta}(i\lambda+\rho)|^{-2} \,c_{\alpha+p,\beta+q}(\lambda)|^{-2}d\lambda\right)^{\frac12}$$ which after recalling  the definition of $F_{\delta,j}(\lambda)$ reads  as 
 $$\left(\int_{ 0}^{\infty}(\lambda^2+\rho_\delta^2)^{2m}|Q_{\delta}(i\lambda+\rho)|^{-2}|\left|\int_K \widetilde{f}(\lambda,k)Y_{\delta,j}(k)dk\right|^2|c_{\alpha+p,\beta+q}(\lambda)|^{-2}d\lambda\right)^{\frac12}.$$  By an application of Minkowski's integral inequality, the above integral is dominated by 
 $$\int_{K}\left(\int_{ 0}^{\infty}(\lambda^2+\rho_\delta^2)^{2m}|Q_{\delta}(i\lambda+\rho)|^{-2} |\widetilde{f}(\lambda,k)|^2|c_{\alpha+p,\beta+q}(\lambda)|^{-2}d\lambda\right)^{\frac12}|Y_{\delta,j}(k)|dk.$$ Now using Cauchy-Schwarz inequality along with the fact that $\|Y_{\delta,j}\|_{L^2(K/M)}=1,$ we see that the above integral is bounded by 
 $$\left(\int_{K/M}\int_{ 0}^{\infty}(\lambda^2+\rho_\delta^2)^{2m}|Q_{\delta}(i\lambda+\rho)|^{-2} |\widetilde{f}(\lambda,k)|^2|c_{\alpha+p,\beta+q}(\lambda)|^{-2}d\lambda \  dk\right)^{\frac12}$$
 Since  $\frac{\lambda^2+\rho_\delta^2}{\lambda^2+\rho^2} = 1+ \frac{\rho_\delta^2-\rho^2}{\lambda^2+\rho^2} $ is a decreasing function  of $\lambda$  it follows that $\frac{\lambda^2+d^2}{\lambda^2+\rho^2}\leq C(\alpha,\beta)$ with $C(\alpha,\beta) = \frac{(\alpha+\beta+p+q+1)^2}{(\alpha+\beta+1)^2}.$ This together with the Lemma \ref{estC} yields the following estimate  for the integral under consideration:  for some constant $ C_1= C_1(\alpha,\beta)$
  $$   C_1^m \left(\int_{K/M}\int_{ 0}^{\infty}(\lambda^2+\rho^2)^{2m}  |\widetilde{f}(\lambda,k)|^2|c_{\alpha ,\beta }(\lambda)|^{-2}d\lambda \  dk\right)^{\frac12}.$$ 
  Finally, from the series of inequalities above, we obtain 
  \begin{equation}
   \|\mathcal{L}^m_{\alpha+p,\beta+q}(f_{\delta,j})\|_{L^2(\R^+, {w}_{\alpha+p,\beta+q}(r)dr)}\leq    C_1^m  \|\Delta_{X}^mf\|_2. 
  \end{equation}
  Hence from the hypothesis of the theorem it follows that 
  $$\sum_{m=1}^{\infty}\|\mathcal{L}^m_{\alpha+p,\beta+q}(f_{\delta,j})\|_{L^2(\R^+, {w}_{\alpha+p,\beta+q}(r)dr)}^{-\frac{1}{2m}}=\infty.$$ 
   \textit{Step 3:} Finally in this step we prove that $\mathcal{L}^m_{\alpha+p,\beta+q}(f_{\delta,j})(0)=0$ for all $m\geq 0.$ First recall that 
  $$f_{\delta,j}(r)=\frac{4^{-(p+q)}}{(\alpha+1)_p}(\sinh r)^{-p}(\cosh r)^{-q}\int_{K}f(ka_r)Y_{\delta,j}(kM)dk.$$  As  $\sinh r$ has a zero at the origin and $\cosh 0=1$,  if we can show that as a function of $r$, the integral $\int_{K}f(ka_r)Y_{\delta,j}(kM)dk$ has a zero of infinite order at the $ 0 ,$ then we are done. Now note that for any $m\in \mathbb{N}$ 
  $$\frac{d^m}{dr^m}\int_{K}f(ka_r)Y_{\delta,j}(kM)dk=\int_{K}\frac{d^m}{dr^m}f(ka_r)Y_{\delta,j}(kM)dk.$$ But by definition of the vector fields on $G$, writing $a_r=\exp (rH)$ we have 
  $$\frac{d^m}{dr^m}f(ka_r)|_{r=0}=\frac{d^m}{dr^m}f(k.\exp(rH))|_{r=0}= H^mf(k).$$  Hence by the hypothesis on $ f $ we obtain $\frac{d^m}{dr^m}f(ka_r)|_{r=0}=0$ for all $m$. Finally, proving  $\mathcal{L}^m_{\alpha+p,\beta+q}(f_{\delta,j})(0)=0$ is a routine matter:  repeated application of L'Hospital rule gives the desired result. 
  
  Therefore,   $f_{\delta,j}$ satisfies all  the hypothesis of the Proposition \ref{chernoffJ} which allows us to  conclude that $f_{\delta,j}=0$ i.e., $F_{\delta,j}=0.$  As this  is true for every $\delta\in \widehat{K_0}$ and $1\leq j\leq d_{\delta}$ we get  $f=0$ completing the proof of Theorem 1.2.

  \section{Compact symmetric spaces}
  Our aim in this section is to prove an analogue of Chernoff's theorem on  compact symmetric spaces of rank one. To begin with, we first recall briefly some   necessary  background material  on rank one compact symmetric spaces. 
 Let $S$ be a compact Riemannian manifold equipped with a Riemannian metric $d_{S}$. We say that $S$ is a two point homogeneous space if for any $x_j,y_j\in S,\ j=1,2$ with $d_{S}(x_1,x_2)=d_{S}(y_1,y_2)$, there exists $g\in I(S)$, the group of isometries of $S$ such that $g.x_1=y_1$, and $g.x_2=y_2$ where $g.x$ denotes the usual action of $I(S)$ on $S$. It is well known that compact rank one symmetric spaces are compact two point homogeneous spaces (see Helgason\cite{H3}). Also these two point homogeneous spaces are completely classified by H-.C. Wang \cite{W}. So, following Wang any compact rank one symmetric space $S$ is one of the following:  
  \begin{enumerate}
     \item the sphere $\mathbb{S}^q \subset \R^{q+1},~q\geq 1$;
      \item the real projective space $P_q(\R),~q\geq 2;$
    \item  the complex projective space $P_l(\mathbb{C}), ~l\geq 2$;
      \item the quaternionic projective space $P_l(\mathbb{H}),~l\geq 2;$
      \item the Cauchy projective plane $P_2(\mathbb{C} a y).$
  \end{enumerate}
 We describe the necessary preliminaries and prove Theorem 1.9 in each of the above five cases separately.  We start with a brief description of Jacobi polynomial expansions in the following subsection.
 \subsection{Jacobi polynomial expansion:}
 Let $\alpha,\beta>-1.$  The  Jacobi polynomials $P_n^{\alpha,\beta}$ of degree $n \geq 0$ and type $(\alpha, \beta)$ are defined by
 \begin{align} \label{jacobi}
 (1-x)^{\alpha} \, (1+x)^{\beta}P_n^{\alpha,\beta}(x)=\frac{(-1)^n}{2^n n!} \frac{d^n}{dx^n}\{(1-x)^{n+\alpha} \, (1+x)^{n+\beta}\} ,~x\in (-1,1).
 \end{align} 
 By making a change of variable $x=\cos\theta$, it is convenient to work with the Jacobi trigonometric polynomials
 \begin{align}
 \mathcal{P}_n^{(\alpha,\beta)}(\theta) =C(\alpha,\beta,n) P_n^{(\alpha,\beta)}(\cos \theta),
 \end{align}
 where $C(\alpha,\beta,n)$ is the normalising constant, explicitly given by  \begin{align} \label{db}
 C(\alpha,\beta,n)^2=\frac{(2n+\alpha+\beta+1)\Gamma(n+1)\Gamma(n+\alpha+\beta+1) }{\Gamma(n+\alpha+1)\Gamma(n+\beta+1)}.
 \end{align} Also it is worth pointing out that these polynomials are closely related to Gegenbauer's polynomials by the following formula 
 \begin{equation}
 \label{gj}
 C_k^{\lambda}(t)=\frac{\Gamma(\lambda+\frac12) \Gamma(k+2\lambda)}{\Gamma(2\lambda)\Gamma(k+\lambda+\frac12)}P_k^{(\lambda-\frac12,\lambda-\frac12)}(t), \lambda >-\frac12, t\in(-1,1).
 \end{equation}
These Jacobi trigonometric polynomials are the eigenfunctions of the Jacobi differential operator given by 
\[\mathbb{L}_{\alpha,\beta}=-\frac{d^2}{d \theta^2}-\frac{\alpha-\beta+(\alpha+\beta+1) \cos \theta}{ \sin \theta}+\left(\frac{\alpha+\beta+1}{2}\right)^2\]
 with eigenvalues $(n+\frac{\alpha+\beta+1}{2})^2$ i.e., $$\mathbb{L}_{\alpha,\beta} \mathcal{P}_n^{(\alpha,\beta)}=\left(n+\frac{\alpha+\beta+1}{2}\right)^2\,  \mathcal{P}_n^{(\alpha,\beta)},$$and  $\{\mathcal{P}_n^{(\alpha,\beta)}:n\geq0\}$ forms an orthonormal basis for the weighted $L^2$ space $L^2(\tilde{w}_{\alpha,\beta}):=L^2((0,\pi), \tilde{w}_{\alpha,\beta}(\theta)d\theta)$ where the weight is given by 
$$\tilde{w}_{\alpha,\beta}(\theta)=\left(\sin \frac{\theta}{2}\right)^{2\alpha+1} \, \left(\cos \frac{\theta}{2}\right)^{2\beta+1}.$$ As a consequence we have the following Plancherel formula valid for $f\in L^2(\tilde{w}_{\alpha,\beta})$
\begin{equation}
\label{jplan}
\int_{ 0}^{\pi}|f(\theta)|^2  \tilde{w}_{\alpha,\beta}(\theta)d\theta=\sum_{n=0}^{\infty}|\mathcal{J}_{\alpha,\beta}f(n)|^2
\end{equation}
where $\mathcal{J}_{\alpha,\beta}f(n)$ denotes the Fourier-Jacobi coefficients defined by 
$$\mathcal{J}_{\alpha,\beta}f(n)=\int_0^{\pi} f(\theta) \, \mathcal{P}_n^{(\alpha,\beta)}(\theta) \, \tilde{w}_{\alpha,\beta}(\theta)d\theta,\ \ \ n\geq 0.$$ 
We have the following version of Chernoff's theorem using the iterates of the Jacobi operator proved in Ganguly-Thangavelu \cite{GT1}.
\begin{thm}
		\label{chernoffJp} 	Let  $\alpha,\beta>-1$. Suppose $ f \in L^2( \tilde{w}_{\alpha,\beta} ) $ is such that $ \mathbb{L}_{\alpha,\beta}^mf \in L^2( \tilde{w}_{\alpha,\beta} ) $ for all $ m \in \Na $  and satisfies the Carleman condition 
	$ \sum_{m=1}^\infty  \|  \mathbb{L}_{\alpha,\beta}^m f \|_2^{-1/(2m)} = \infty.$ If $\mathbb{L}_{\alpha,\beta}^mf(0)=0$ for all $m\geq 0$ then $f$ is identically zero.
\end{thm}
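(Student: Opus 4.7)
The plan is to reduce Theorem 4.1 to the classical one-dimensional Denjoy-Carleman theorem by constructing an auxiliary even smooth function on $\R$ whose derivatives at the origin are controlled by the iterates $\mathbb{L}_{\alpha,\beta}^m f(0)$.

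To begin, expand $f = \sum_{n \geq 0} a_n\, \mathcal{P}_n^{(\alpha,\beta)}$ with $a_n := \mathcal{J}_{\alpha,\beta}f(n)$. Since $\mathbb{L}_{\alpha,\beta}\mathcal{P}_n^{(\alpha,\beta)} = \mu_n^2\, \mathcal{P}_n^{(\alpha,\beta)}$ with $\mu_n := n + \tfrac{\alpha+\beta+1}{2}$, the Plancherel identity gives $\|\mathbb{L}_{\alpha,\beta}^m f\|_2^2 = \sum_n \mu_n^{4m}|a_n|^2$. The assumption $\mathbb{L}_{\alpha,\beta}^m f \in L^2$ for every $m$ forces $(a_n)$ to decay faster than any polynomial, so $f$ is smooth, and the hypothesis $\mathbb{L}_{\alpha,\beta}^m f(0)=0$ is equivalent to the moment relations
$$\sum_{n \geq 0} \mu_n^{2m}\, a_n\, \mathcal{P}_n^{(\alpha,\beta)}(0) = 0, \qquad m \geq 0.$$

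Next, I would introduce
$$G(s) := \sum_{n \geq 0} a_n\, \mathcal{P}_n^{(\alpha,\beta)}(0)\cos(\mu_n s), \qquad s \in \R,$$
which is a pointwise realization of the cosine propagator $\cos(s\sqrt{\mathbb{L}_{\alpha,\beta}})f$ evaluated at $\theta = 0$. Term-by-term differentiation (justified by the rapid decay of $a_n$) combined with the moment relations above and the evenness of $G$ show that $G \in C^\infty(\R)$ with $G^{(k)}(0) = 0$ for every $k \geq 0$. The core step is to place $G$ in a quasi-analytic class: fixing an integer $k > (\alpha+1)/2$, applying Cauchy-Schwarz to the differentiated series, and using the well-known asymptotic $|\mathcal{P}_n^{(\alpha,\beta)}(0)| \sim c\, n^{\alpha+1/2}$, one obtains
$$\|G^{(2m)}\|_\infty \leq \Bigl(\sum_n \mu_n^{4(m+k)}|a_n|^2\Bigr)^{1/2}\Bigl(\sum_n \mu_n^{-4k}|\mathcal{P}_n^{(\alpha,\beta)}(0)|^2\Bigr)^{1/2} \leq C\, \|\mathbb{L}_{\alpha,\beta}^{m+k} f\|_2,$$
the second factor being finite by the choice of $k$. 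A standard log-convex manipulation then transfers the Carleman hypothesis on $\|\mathbb{L}_{\alpha,\beta}^m f\|_2$ into $\sum_m \|G^{(2m)}\|_\infty^{-1/(2m)} = \infty$.

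All derivatives of $G$ vanishing at $0$, the classical Denjoy-Carleman theorem now yields $G \equiv 0$ on $\R$. Since the frequencies $(\mu_n)$ are pairwise distinct, uniqueness of almost-periodic expansions (extracted e.g.\ from the mean values $\lim_{T\to\infty} T^{-1}\int_0^T G(s)\cos(\mu_n s)\,ds$) forces $a_n\, \mathcal{P}_n^{(\alpha,\beta)}(0) = 0$ for every $n$; because $\mathcal{P}_n^{(\alpha,\beta)}(0) = C(\alpha,\beta,n)\binom{n+\alpha}{n} \neq 0$, one concludes $a_n = 0$ for all $n$, hence $f \equiv 0$. I expect the principal technical difficulty to lie in the Carleman transfer step: the fixed index shift from $m$ to $m+k$ in the estimate above must be absorbed without destroying the divergence of $\sum_m \|\cdot\|^{-1/(2m)}$, which is routinely handled via the log-convex minorant of $(\|\mathbb{L}_{\alpha,\beta}^m f\|_2)$ but does require some care.
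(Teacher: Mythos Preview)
The present paper does not give a proof of this statement: it is quoted from \cite{GT1} and used as a black box in Section~4. There is therefore no in-paper argument to compare your proposal against.

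Your approach is sound and self-contained. The only points that would need to be made explicit in a full write-up are essentially the ones you already flag. First, the classical Denjoy--Carleman theorem requires control of \emph{all} derivatives of $G$, not only the even ones; the odd derivatives are estimated by exactly the same Cauchy--Schwarz split, so this is a one-line addition. Second, the index shift $m\mapsto m+k$ in the Carleman condition is, as you anticipate, the one genuinely delicate step: it is handled cleanly by passing to the equivalent form $\sum_m (M_{m-1}/M_m)^{1/2}=\infty$ (valid for the log-convex sequence $M_m=\|\mathbb{L}_{\alpha,\beta}^m f\|_2$), which is manifestly stable under a finite shift of index, and then passing back. The final extraction of coefficients from $G\equiv 0$ via Bohr means is unproblematic because the frequencies $\mu_n=n+\tfrac{\alpha+\beta+1}{2}$ have pairwise distinct absolute values when $\alpha+\beta>-2$, and $\mathcal{P}_n^{(\alpha,\beta)}(0)=C(\alpha,\beta,n)\binom{n+\alpha}{n}\neq 0$ for $\alpha>-1$.
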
 

This is the analogue of Theorem 3.1 for Jacobi polynomial expansions which plays an important role in proving Theorem 1.9 for compact Riemannian symmetric spaces.

\subsection{The unit sphere $\mathbb{S}^q$} Let $q\geq 2$. The unit sphere in $\mathbb{R}^{q+1}$ is given by $$\mathbb{S}^{q}:=\{\xi \in \R^{q+1}:\xi_1^2+\cdots+\xi_{q+1}^2=1\}.$$ The spherical harmonic decomposition reads as 
$$L^2(\mathbb{S}^q)=\displaystyle\bigoplus_{n=0}^{\infty}\mathcal{H}_{n}(\mathbb{S}^q)$$ where $\mathcal{H}_{n}(\mathbb{S}^q)$ denotes the set of spherical harmonics of degree $n$. Now, for our purposes it is more convenient to work with the geodesic polar coordinate system on $\mathbb{S}^q$. Note that given $\xi\in \mathbb{S}^q$, we can write $\xi= (\cos\theta) e_1+\xi_1^{'}(\sin\theta) e_2+...+\xi_{q}^{'}(\sin\theta) e_{q+1}$ for some $\theta \in (0,\pi)$ and $\xi^{'}=(\xi_1^{'},...,\xi_{q}^{'})\in \mathbb{S}^{q-1}$ where $\{e_1,e_2,...,e_{q+1}\}$ is the standard basis for $\mathbb{R}^{q+1}.$ This observation drives us to consider the map $\varphi:(0,\pi)\times \mathbb{S}^{q-1}\rightarrow \mathbb{S}^{q}$ defined by 
$$\varphi(\theta, \xi')=(\cos \theta, \xi_1' \sin \theta, \dots, \xi_q' \sin \theta)$$ which induces the geodesic polar coordinate system on $\mathbb{S}^q$. This also provides a polar decomposition of the normalised measure $d\sigma_q$ on $\mathbb{S}^{q}$ as follows:  Given a suitable function $f$ on $\mathbb{S}^q$ we have 
 \[\int_{\mathbb{S}^q}f(\xi) d\sigma_q(\xi)=\int_{0}^{\pi} \int_{\mathbb{S}^{q-1}}F(\theta,\xi') \, (\sin \theta)^{q-1} d\sigma_{q-1}(\xi^{'}) d\theta\] where $F=f\circ \varphi.$ Also in this coordinate system, we have the following representation of the Laplace-Beltrami operator 
 $$\Delta_{\mathbb{S}^q}=-\frac{\partial^2}{\partial\theta^2}-(q-1)\cot\theta \frac{\partial}{\partial\theta}+\frac14(q-1)^2-\sin^{-2}\theta\tilde{\Delta}_{\mathbb{S}^{q-1}}$$
   The following theorem gives a representation of the spherical harmonics in this polar coordinate system.
 \begin{thm}\cite[Theorem 2.4]{K}
 	\label{polarep}
 	For $n\geq0$ we have the following orthogonal decomposition
 	$$\mathcal{H}_n(\mathbb{S}^q)=\displaystyle\bigoplus_{l=0}^n\mathcal{H}_{n,l}(\mathbb{S}^q)$$ where the subspaces $\mathcal{H}_{n,l}(\mathbb{S}^q)$ are irreducible and invariant under $SO(q)$. Moreover, functions in $\mathcal{H}_{n,l}(\mathbb{S}^q)$ can be represented as 
 $$S(\xi)=(\sin\theta)^lC^{q/2-1/2+l}_{n-l}(\cos\theta)S^{'}_l(\xi^{'})$$ where $\xi=\varphi(\theta,\xi^{'})$ and $S_{l}^{'}\in \mathcal{H}_l(\mathbb{S}^{q-1}).$
 \end{thm}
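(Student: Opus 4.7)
The approach is to decompose $\mathcal{H}_n(\mathbb{S}^q)$ under the subgroup $SO(q) \subset SO(q+1)$ that stabilises $e_1$ (and thus acts on $\mathbb{S}^q$ fixing the $\theta$-coordinate of $\varphi(\theta,\xi')$ and rotating $\xi'$), and then identify each isotypic component explicitly using the harmonicity equation.

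First, since $SO(q+1)$ acts unitarily on $\mathcal{H}_n(\mathbb{S}^q)$, so does the subgroup $SO(q)$, and the polar slice map $S \mapsto S(\varphi(\theta,\cdot))$ is $SO(q)$-equivariant from $\mathcal{H}_n(\mathbb{S}^q)$ into $C^\infty(\mathbb{S}^{q-1})$ for each fixed $\theta\in(0,\pi)$. Defining $\mathcal{H}_{n,l}(\mathbb{S}^q)$ as the subspace of $S \in \mathcal{H}_n(\mathbb{S}^q)$ whose slice $S(\varphi(\theta,\cdot))$ lies in $\mathcal{H}_l(\mathbb{S}^{q-1})$ for every $\theta$, one obtains an $SO(q)$-invariant orthogonal sum $\bigoplus_{l \geq 0}\mathcal{H}_{n,l}(\mathbb{S}^q)$; orthogonality across distinct $l$ comes from the polar Fubini decomposition of $d\sigma_q$ together with the orthogonality of $\mathcal{H}_l(\mathbb{S}^{q-1})$ in $L^2(\mathbb{S}^{q-1})$.

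Next, I would pin down the precise form of an element of $\mathcal{H}_{n,l}(\mathbb{S}^q)$. Writing a candidate as $S(\xi) = g(\theta)\, S_l'(\xi')$ with $S_l' \in \mathcal{H}_l(\mathbb{S}^{q-1})$, and using $\tilde{\Delta}_{\mathbb{S}^{q-1}} S_l' = -l(l+q-2) S_l'$ together with the polar expression for $\Delta_{\mathbb{S}^q}$ recalled earlier in the excerpt, the eigenvalue equation $\Delta_{\mathbb{S}^q} S = \bigl(n+(q-1)/2\bigr)^2 S$ collapses to a second order linear ODE for $g$. The substitution $t = \cos\theta$, $g(\theta) = (\sin\theta)^l h(t)$ converts it into the Gegenbauer equation
$$(1-t^2)\,h''(t) - (2\lambda+1)\,t\,h'(t) + k(k+2\lambda)\,h(t) = 0$$
with $\lambda = (q-1)/2 + l$ and $k = n-l$. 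Regularity of $S$ at the poles $\theta = 0,\pi$ forces the solution $h$ to be polynomial, hence a scalar multiple of $C^{(q-1)/2+l}_{n-l}(t)$, which is exactly the claimed formula for $S$.

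Finally, the above construction realises $\mathcal{H}_{n,l}(\mathbb{S}^q)$ as the image of $\mathcal{H}_l(\mathbb{S}^{q-1})$ under the $SO(q)$-equivariant linear map $S_l' \mapsto (\sin\theta)^l C^{(q-1)/2+l}_{n-l}(\cos\theta)\, S_l'(\xi')$, so $\mathcal{H}_{n,l}(\mathbb{S}^q)$ is isomorphic to $\mathcal{H}_l(\mathbb{S}^{q-1})$ as an $SO(q)$-module and is therefore irreducible. To see that the sum is exhaustive, one compares dimensions: the standard binomial identity $\sum_{l=0}^{n}\dim\mathcal{H}_l(\mathbb{S}^{q-1}) = \dim\mathcal{H}_n(\mathbb{S}^q)$ (a Pascal-type recursion on harmonic polynomial spaces) closes the argument. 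The main obstacle lies in the middle step: one must keep careful track of the constant shift $(q-1)^2/4$ built into $\Delta_{\mathbb{S}^q}$ and of the parameter shift produced by the $(\sin\theta)^l$ factor, so that the resulting ODE is recognised as precisely the Gegenbauer equation of order $n-l$ and parameter $(q-1)/2+l$, with regularity at both endpoints singling out the polynomial solution.
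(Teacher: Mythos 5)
The paper does not prove this statement at all: it is imported verbatim as \cite[Theorem 2.4]{K} (Koornwinder), so there is no internal argument to compare against. Your blind proof is the standard one and is essentially correct: separate variables in the polar chart, use the polar form of $\Delta_{\mathbb{S}^q}$ together with $\tilde{\Delta}_{\mathbb{S}^{q-1}}S_l'=-l(l+q-2)S_l'$ to reduce the eigenvalue equation to an ODE in $\theta$, peel off $(\sin\theta)^l$ and substitute $t=\cos\theta$ to land on the Gegenbauer equation with parameter $\lambda=l+\frac{q-1}{2}$ and degree $k=n-l$, invoke regularity at both poles to isolate the polynomial solution, and close with the telescoping dimension identity $\sum_{l=0}^{n}\dim\mathcal{H}_l(\mathbb{S}^{q-1})=\dim\mathcal{H}_n(\mathbb{S}^q)$. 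Two small points deserve attention if you were to write this out in full. First, the regularity step is doing real work: for $n-l$ not a nonnegative integer the solution of the Gegenbauer equation that is analytic at $t=1$ is a non-terminating hypergeometric series with $c-a-b=\tfrac12-\lambda\le 0$, hence singular at $t=-1$, and this is what kills the components with $l>n$ and makes each radial solution space one-dimensional; it is worth saying this explicitly rather than leaving it at ``regularity forces $h$ polynomial.'' Second, the irreducibility of $\mathcal{H}_{n,l}(\mathbb{S}^q)\cong\mathcal{H}_l(\mathbb{S}^{q-1})$ under $SO(q)$ holds for $q\ge 3$ but fails for $q=2$ (where $SO(2)$ is abelian and $\mathcal{H}_l(\mathbb{S}^1)$ is two-dimensional for $l\ge1$); since the paper's subsection assumes $q\ge 2$, a one-line caveat or a switch to $O(q)$ for that case would be needed for a fully faithful reproduction of Koornwinder's statement. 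Neither point affects the use made of the theorem later in the paper, which only needs the explicit basis $S_{n,l,k}$ and its orthogonality.
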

In view of the above theorem we have the orthogonal decomposition $$L^2(\mathbb{S}^q)=\bigoplus_{n=0}^{\infty}\bigoplus_{l=0}^n\mathcal{H}_{n,l}(\mathbb{S}^q).$$ Now we set $$S_{n,l,k}(\xi)=a_{n,l} \, (\sin \theta)^l \, C_{n-l}^{l+\frac{q-1}{2}}(\cos \theta) \, S^{'}_{k,l}(\xi')$$ where $\{S^{'}_{l,k}: 1\leq k\leq N(l)\}$ is an orthonormal basis for $\mathcal{H}_{l}(\mathbb{S}^{q-1}).$ Here $a_{n,l}$ is the normalising constant so that $\|S_{n,l,k}\|_{L^2(\mathbb{S}^q)}=1$ and it is explicitly given by 
\begin{align}
\label{anl}
	a_{n,l}=\frac{2^{-(l+\frac{q-1}{2})}\Gamma(2l+q-1)\Gamma(n+\frac q2) }{\Gamma(l+\frac q2)\Gamma(n+l+q-1)} \,  C\left(l+\frac{q-2}{2},l+\frac{q-2}{2},n-l\right).
\end{align}
\begin{thm}
	\label{chsphere}
	Let $f \in C^{\infty}(\mathbb{S}^q)$ be such that $\Delta^m_{\mathbb{S}^d}f\in L^2(\mathbb{S}^q)$ for all $m\geq0 $ and satisfies
	\[\sum_{m=1}^{\infty}\|\Delta^m_{\mathbb{S}^q}f\|_2^{-\frac{1}{2m}} =\infty.\]
	If $\frac{\partial^m}{\partial\theta^m} \big|_{\theta=0} F(\theta, \xi')=0$ for all $m\geq 0$  and for all $\xi' \in \mathbb{S}^{q-1},$ then $f$ is identically zero.
	
\end{thm}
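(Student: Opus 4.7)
The plan is to reduce Theorem \ref{chsphere} to Chernoff's theorem for Jacobi polynomial expansions (Theorem \ref{chernoffJp}) by projecting $f$ onto spherical harmonics on the equatorial sphere $\mathbb{S}^{q-1}$ and reading the resulting one-variable functions as expansions in Jacobi trigonometric polynomials $\mathcal{P}_n^{(\alpha,\beta)}$ with parameters $\alpha=\beta=l+\frac{q-2}{2}$. This is the precise spherical counterpart of Steps 1--3 in the proof of Theorem \ref{C}.

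Write $f=\sum_{n,l,k}c_{n,l,k}\,S_{n,l,k}$ with $c_{n,l,k}=\langle f,S_{n,l,k}\rangle_{L^2(\mathbb{S}^q)}$. Using the Gegenbauer--Jacobi identity \eqref{gj} together with the normalising constant $a_{n,l}$ given in \eqref{anl}, a short computation shows that the Gamma factors telescope and
$$S_{n,l,k}\bigl(\varphi(\theta,\xi')\bigr)=C_{l,q}\,(\sin\theta)^l\,\mathcal{P}_{n-l}^{(\alpha,\beta)}(\theta)\,S'_{k,l}(\xi'),$$
with $C_{l,q}$ independent of $n$. For each $(l,k)$ I then define
$$h_{l,k}(\theta)=C_{l,q}^{-1}(\sin\theta)^{-l}\int_{\mathbb{S}^{q-1}}F(\theta,\xi')\,\overline{S'_{k,l}(\xi')}\,d\sigma_{q-1}(\xi'),$$
whose Jacobi expansion of type $(\alpha,\beta)$ has the coefficients $\mathcal{J}_{\alpha,\beta}h_{l,k}(n-l)=c_{n,l,k}$ for $n\geq l$.

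Since $\Delta_{\mathbb{S}^q}S_{n,l,k}=(n+(q-1)/2)^2 S_{n,l,k}$ matches the eigenvalue $\mathbb{L}_{\alpha,\beta}\mathcal{P}_{n-l}^{(\alpha,\beta)}=(n+(q-1)/2)^2\mathcal{P}_{n-l}^{(\alpha,\beta)}$, combining the Jacobi Plancherel formula \eqref{jplan} with the Plancherel formula on $\mathbb{S}^q$ yields
$$\|\mathbb{L}_{\alpha,\beta}^m h_{l,k}\|_{L^2(\tilde w_{\alpha,\beta})}^2=\sum_{n\geq l}|c_{n,l,k}|^2\bigl(n+(q-1)/2\bigr)^{4m}\leq\|\Delta_{\mathbb{S}^q}^m f\|_2^2,$$
so $h_{l,k}$ inherits the Carleman divergence from $f$. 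Differentiating under the integral, the hypothesis $\partial_\theta^m F(0,\xi')=0$ implies that the numerator defining $h_{l,k}$ vanishes to infinite order at $\theta=0$; dividing by the simple zero $(\sin\theta)^l$ keeps $h_{l,k}$ smooth on $[0,\pi)$ with all derivatives vanishing at $0$. Repeated application of L'Hospital (exactly as in Step 3 of the proof of Theorem \ref{C}) then gives $\mathbb{L}_{\alpha,\beta}^m h_{l,k}(0)=0$ for every $m\geq 0$.

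Having verified all hypotheses of Theorem \ref{chernoffJp}, we conclude $h_{l,k}\equiv 0$; hence $c_{n,l,k}=0$ for all $n\geq l$. Running the argument over every $(l,k)$ and using the completeness of $\{S_{n,l,k}\}$ in $L^2(\mathbb{S}^q)$ forces $f\equiv 0$. The only delicate step is the algebraic identity producing the $n$-independent constant $C_{l,q}$: without this cancellation of Gamma factors the Carleman bound would degrade with $n$ and the reduction would fail. Everything else is a direct adaptation of the non-compact argument.
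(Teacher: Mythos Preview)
Your proof is correct and follows the same strategy as the paper's own argument: decompose along spherical harmonics on $\mathbb{S}^{q-1}$, identify the radial pieces as Jacobi expansions with $\alpha=\beta=l+\tfrac{q-2}{2}$, and invoke Theorem~\ref{chernoffJp}. Your observation that the Jacobi eigenvalue $\bigl((n-l)+\tfrac{2\alpha+1}{2}\bigr)^2$ coincides exactly with the spherical eigenvalue $\bigl(n+\tfrac{q-1}{2}\bigr)^2$ yields the clean bound $\|\mathbb{L}_{\alpha,\alpha}^m h_{l,k}\|_2\le\|\Delta_{\mathbb{S}^q}^m f\|_2$, so you avoid the auxiliary factor $(1+2l/(q-1))^{2m}$ and the Cauchy--Schwarz step that the paper introduces at this point; otherwise the two arguments are identical.
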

\begin{proof}
	Let $f$ be as in the statement of the theorem. For $n\geq0$, let $P_nf$ denote the projection of $f$ onto the space $\mathcal{H}_n(\mathbb{S}^q).$ Then from the above observations we have 
	\begin{equation}
	\label{sproj}
	P_nf= \sum_{l=0}^n\sum_{k=1}^{N(l)}(f, S_{n,l,k})_{L^2}S_{n,l,k}.
	\end{equation}
 Also since $f\in L^2(\mathbb{S}^q)$ we have 
 $$f=\sum_{n=0}^{\infty} P_nf=\sum_{n=0}^{\infty}\sum_{l=0}^n\sum_{k=1}^{N(l)} ( f, S_{n,l,k})_{L^2(\mathbb{S}^q)} S_{n,l.k}$$ 
 By interchanging the  summations, we observe that 
 \begin{align*}
 &f=\sum_{l=0}^{\infty}\sum_{n=l}^{\infty}\sum_{k=1}^{N(l)} ( f, S_{n,l,k})_{L^2(\mathbb{S}^q)} S_{n,l.k}\\
 &=\sum_{l=0}^{\infty}\sum_{n=0}^{\infty}\sum_{k=1}^{N(l)} (f, S_{n+l,l,k})_{L^2(\mathbb{S}^q)} S_{n+l,l.k}.
 \end{align*} 
	In view of this, to prove the theorem it is enough to prove that $(f, S_{n+l,l,k})_{L^2(\mathbb{S}^q)} = 0 $ for all $n,l,k.$ To start with, let us first fix $n,l$ and $k$.  From the expansion \ref{sproj} we observe that 
	\begin{equation}
	\label{a11}
	  (P_nf, S_{n+l,l,k})_{L^2(\mathbb{S}^q)}=( f, S_{n+l,l,k})_{L^2(\mathbb{S}^q)}.
	\end{equation}
	Next we use the expression for $S_{n+l,l,k}$ to show that these coefficients are nothing but Jacobi coefficients of a suitable function. 
	In order to do so, we write the integral on $\mathbb{S}^q$ in polar coordinates to obtain 
	$$( f, S_{n+l,l,k})_{L^2(\mathbb{S}^q)}=\int_0^{\pi} \int_{\mathbb{S}^{q-1}} F(\theta,\xi') \, a_{n+l,l} \,  (\sin \theta)^{l+q-1} \, C_{n}^{l+\frac{q-1}{2}}(\cos \theta) \, S^{'}_{k,l}(\xi')  \, d\sigma_{q-1}({\xi'}) \, d\theta$$
	where $F:=f\circ \varphi.$  Now using \ref{db}, \ref{gj} and \ref{anl}, a simple calculation yields 
	\begin{align}
	a_{n+l,l}C_{n}^{l+\frac{q-1}{2}}(\cos \theta)=2^{-(l+\frac{q-1}{2})} C(l+\frac q2-1,l+\frac q2-1,n)
 P_n^{(l+\frac q2-1,l+\frac q2-1)}(\cos \theta)
	\end{align}
	which transforms the above equation into 
	\begin{equation}
	\label{a111}
	( f, S_{n+l,l,k})_{L^2(\mathbb{S}^q)}=2^{-(l+\frac{q-1}{2})}\int_{0}^{\pi}F_{k,l}(\theta)  \,(\sin \theta)^{l+q-1} \,\mathcal{P}_n^{(l+\frac d2-1,l+\frac d2-1)}(\theta) \, d\theta
	\end{equation} 
	where we have defined 
	$$F_{k,l}(\theta):=\int_{\mathbb{S}^{q-1}}F(\theta,\xi^{'})S^{'}_{k,l}(\xi')  \, d\sigma_{q-1}({\xi'}).$$
	Now letting $\alpha=l+\frac d2-1$ and writing $\sin\theta=2\sin\frac{\theta}{2}\cos\frac{\theta}{2}$ we see that
	$$(\sin\theta)^{l+q-1}=2^{l+q-1} (\sin\theta)^{-l}w_{\alpha,\alpha}(\theta)$$
	which together with \ref{a111} yields
	\begin{equation}
	( f, S_{n+l,l,k})_{L^2(\mathbb{S}^q)}= \mathcal{J}_{\alpha,\alpha}(g_{k,l})(n)
	\end{equation} 
where $g_{k,l}(\theta):=  2^{\frac{q-1}{2}}(\sin\theta)^{-l}F_{k,l}(\theta).$  
 
In view of  the Plancherel formula \ref{jplan} and the relation \ref{a11} we have 
\begin{align}
\label{a1111}
\|\mathbb{L}_{\alpha,\alpha}^mg_{l,k}\|_2^2&= \sum_{n=0}^{\infty}\left(n+\frac{2\alpha+1}{2}\right)^{4m} \, |\mathcal{J}_{\alpha,\alpha}(g_{l,k})(n)|^2\nonumber\\
&= \sum_{n=0}^{\infty}\left(n+\frac{2l+q-1}{2}\right)^{4m} \, \left| \int_{\mathbb{S}^d} P_nf(\xi) S_{n+l,l,k}(x) d \sigma_{q}(\xi)\right|^2 ,
\end{align}
By Cauchy-Schwarz inequality we note that 
$$\left| \int_{\mathbb{S}^d} P_nf(\xi) S_{n+l,l,k}(\xi) d \sigma_{q}(\xi)\right|^2\leq \|P_nf\|_{L^2(\mathbb{S}^q)}^2.$$ 
Finally, using the fact that $n+\frac12(2\alpha+1)= n+\frac12(2l+q-1)\leq \left(n+\frac{q-1}{2}\right)\left(1+\frac{2l}{q-1}\right),$ from \ref{a1111} we get the estimate
 $$\|\mathbb{L}_{\alpha,\alpha}^mg_{l,k}\|_2^2\leq \left(1+\frac{2l}{q-1}\right)^{4m} \sum_{n=0}^{\infty}\left(n+\frac{q-1}{2}\right)^{4m} \, \|P_nf\|^2_{L^2(\mathbb{S}^q)}.$$
 Therefore, we have proved
 \[\|\mathbb{L}_{\alpha,\alpha}^mg_{l,k}\|_2 \leq  \left(1+\frac{2l}{q-1}\right)^{2m} \|\Delta^m_{\mathbb{S}^q}f\|_{L^2(\mathbb{S}^q)}\]
 which by  the hypothesis on the function $ f ,$ implies that
 \begin{equation}
 \label{carcon}
 \sum_{m=1}^{\infty}\|\mathbb{L}_{\alpha,\alpha}^mg_{l,k}\|_2^{-\frac{1}{2m}} =\infty.
 \end{equation} 
 Since $ g_{l,k}(\theta) $ is related to $ F(\theta,\xi') $ via the integral
 $$g_{l,k}(\theta)= 2^{\frac{q-1}{2}}(\sin\theta)^{-l}\int_{\mathbb{S}^{q-1}}F(\theta,\xi^{'})S^{'}_{k,l}(\xi')  \, d\sigma_{q-1}({\xi'})$$
  the hypothesis  $\frac{\partial^m}{\partial\theta^m} \big|_{\theta=0}F(\theta, \xi')=0$ for all $m\geq 0$ allows us to conclude  that $\mathbb{L}_{\alpha,\alpha}^mg_{l,k}(0)=0$ for all $m\geq 0.$ Hence $g_{l,k}$ satisfies the hypotheses of  Theorem \ref{chernoffJp} and hence we conclude that  $g_{l,k} = 0 $ and consequently $ ( f, S_{n+l,l,k})_{L^2(\mathbb{S}^q)}=0.$  As this is true for any  $n,l,k$, we conclude that $f=0$ 
  completing the proof of the theorem.
 \end{proof}
 
 \subsection{The real projective spaces $P_{q}(\mathbb{R})$}  
 Let $O(q)$ denote the group of $q\times q$ orthogonal matrices. Then $P_q(\mathbb{R})$ can be identified with $SO(q+1)/ O(q)$ which makes this a compact symmetric space. Now  it is well-known that the real projective space $P_q(\mathbb{R})$ can be obtained from $\mathbb{S}^q$ by identifying the antipodal points i.e., $P_q(\mathbb{R})=\mathbb{S}^q/\{\pm I\}$ and the projection map $s\rightarrow \pm s$ from $\mathbb{S}^q$ to $P_q(\mathbb{R})$ is locally an isometry. So, the  functions on $P_q(\mathbb{R})$ can be viewed as  even functions on the corresponding sphere $\mathbb{S}^q$ and if $f_e$ is the even function on $\mathbb{S}^q$ corresponding to the function $f$ on $P_q(\mathbb{R})$ then $\Delta_{P_q(\mathbb{R})}f=\Delta_{\mathbb{S}^q}f_e.$  Hence the analogue of Chernoff's theorem on $P_q(\mathbb{R})$ follows directly from the case of sphere.
 
 \subsection{The other projective spaces  $P_l(\mathbb{C})$, $P_l(\mathbb{H}),$ and $P_2(\mathbb{C} a y)$}
 
As pointed out by T. O. Sherman in \cite{S}, analysis on these three projective spaces is quite similar. Closely following the notations of \cite{S} (see also \cite{CRS}), we first describe the appropriate polar coordinate representation of these spaces and then as in the sphere case we prove the Chernoff's theorem for the associated Laplace-Beltrami operators. To begin with, let $S$ denote any of these three spaces $P_l(\mathbb{C})$, $P_l(\mathbb{H}),$ and $P_2(\mathbb{C} a y)$. Suppose $\tilde{\Delta}_S$ denotes the corresponding Laplace-Beltrami operator. Let $d\mu$  denote the normalised Riemann measure on $S$. We have the following orthogonal decomposition:
\[L^2(S,d\mu):=L^2(S)=\bigoplus_{n=0}^{\infty} \mathcal{H}_{n}(S),\] where $\mathcal{H}_{n}(S)$ are finite dimensional and eigenspaces of $\tilde{\Delta}_S$ with eigenvalue $-n(n+k+q)$ where $q=2,4,8,~k=l-2,2l-3,3,$ for $P_l(\mathbb{C}),~P_l(\mathbb{H})$ and $P_2(\mathbb{C} a y),$ respectively. However, it is convenient to  work with $\Delta_S:=-\tilde{\Delta}_{S}+\rho_S^2$ where $\rho_S:=\frac12(k+q).$  As a result $\mathcal{H}_n(S)$ becomes eigenspaces of $\Delta_{S}$ with eigenvalue $\left(n+\frac{k+q}{2}\right)^2.$  

Let $\Omega:=\{x\in\mathbb{R}^{q+1}:|x|\leq1\}$ be the closed unit ball in $\mathbb{R}^{q+1}.$ We consider a weight function  $w$ defined by $w(r):=r^{-1}(1-r)^k$ for $0<r\leq 1.$ With these notations we have the following result proved in \cite[Lemma 4.15]{S}.

\begin{prop}
	\label{e}
	There is a bounded linear map $E:L^1(S) \to L^1(\Omega, w(|x|) dx)$ satisfying
	\begin{enumerate}
		\item For $f\in L^1(S)$,
		\[\int_S f d\mu=\int_{\Omega}E(f)(x) \, w(|x|) \, dx\]
		
		\item The norm of $E$ as a map from $L^p(S)$ to $L^p(\Omega, w(|x|) dx)$ is $1~(1\leq p\leq \infty).$	
	\end{enumerate}
\end{prop}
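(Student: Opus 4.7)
The plan is to construct $E$ as a fibrewise conditional expectation associated with a natural measurable map $\pi: S \to \Omega$ whose pushforward of the Riemannian measure is $w(|x|)\,dx$, and then to deduce both properties from the disintegration of measure combined with Jensen's inequality.

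\textbf{Construction of $\pi$.} In each case one uses the embedding of $S$ into a space of Hermitian matrices (or, for the Cayley plane, into the exceptional Jordan algebra of $3 \times 3$ Hermitian octonionic matrices) as the space of rank-one projections. For $P_l(\C)$ this takes the concrete form: every line $[v]$ is represented by the projection $P_{[v]} = vv^*/|v|^2$, and $\pi$ can be defined by
$$\pi([v]) := \bigl(v_1 \bar v_0,\, |v_0|^2 - \tfrac{1}{2}\bigr) \in \C \oplus \R,$$
which is well-defined modulo the $U(1)$-ambiguity $v \mapsto e^{i\phi} v$ and whose image is the ball $\Omega$ of radius $\tfrac{1}{2}$ in $\R^3$. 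Analogous formulas (using one off-diagonal plus one diagonal entry of $P_{[v]}$) produce $\pi$ for $P_l(\He)$ with target in $\He \oplus \R \cong \R^5$, and for $P_2(\C ay)$ with target in $\C ay \oplus \R \cong \R^9$. The generic fibres of $\pi$ are spheres of dimensions $2l-3$, $4l-5$, and $7$ respectively, carrying natural $K$-orbit structures under the isotropy group.

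\textbf{Pushforward and disintegration.} A direct Jacobian computation in affine coordinates on the open dense chart $\{v_0 \neq 0\}$ identifies $\pi_*\, d\mu$ with a constant multiple of $w(|x|)\,dx$: the factor $r^{-1}$ in $w$ arises from the polar Jacobian on $\R^{q+1}$, while $(1-r)^k$ encodes the Fubini--Study type volume scaling (the exponent $k$ being forced by the rank of $S$ and the multiplicity of the longer restricted root). With the pushforward identified, disintegrate $d\mu = w(|x|)\,dx \otimes d\sigma_x$, where each $d\sigma_x$ is a probability measure on $\pi^{-1}(x)$, and define
$$E(f)(x) := \int_{\pi^{-1}(x)} f\, d\sigma_x.$$
Property (1) is then immediate from Fubini against the disintegration, and Jensen's inequality applied to $t \mapsto |t|^p$ gives
$$\|E(f)\|_{L^p(\Omega, w)}^p \,\leq\, \int_\Omega \int_{\pi^{-1}(x)} |f|^p\, d\sigma_x\, w(|x|)\,dx \,=\, \|f\|_{L^p(S)}^p,$$
so $\|E\|_{L^p \to L^p} \leq 1$; equality on any fibre-constant function (e.g.\ constants) forces the operator norm to be exactly $1$, giving (2).

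\textbf{Main obstacle.} The technically demanding step is the Jacobian computation for the Cayley plane $P_2(\C ay)$: the non-associativity of the octonions obstructs the naive matrix calculation that works for $P_l(\C)$ and $P_l(\He)$, so one must work $\mathrm{Spin}(9)$-equivariantly inside the exceptional Jordan algebra and exploit the triality structure to isolate the correct $9$-dimensional slice and compute its volume scaling. This is the substantial content of Sherman's Lemma 4.15 in \cite{S}, to which the proof would ultimately appeal for this case.
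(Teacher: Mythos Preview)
The paper does not prove this proposition at all: it is stated with the attribution ``the following result proved in \cite[Lemma 4.15]{S}'' and no argument is given. So there is no proof in the paper to compare your proposal against.

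Your sketch is a reasonable outline of the standard strategy (and plausibly of Sherman's own argument): realise $E$ as the conditional expectation along a surjection $\pi:S\to\Omega$ whose pushforward of $d\mu$ is $w(|x|)\,dx$, then read off (1) from Fubini and (2) from Jensen. You yourself concede that the nontrivial content---the explicit construction of $\pi$ and the Jacobian computation, especially for $P_2(\mathbb{C}ay)$---is deferred to \cite{S}. In that sense your proposal is not so much an independent proof as a structural explanation of why the result holds, with the hard case outsourced to the same reference the paper cites. A minor quibble: your concrete formula for $\pi$ on $P_l(\mathbb{C})$ lands in a ball of radius $\tfrac12$, whereas the paper normalises $\Omega$ to be the unit ball, so a rescaling is needed; this is cosmetic.
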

 
 The integration formula in the above proposition is very useful. In fact, integrating the right hand side of that formula in polar coordinates we have 
 $$\int_{\Omega}E(f)(x) \, w(|x|) \, dx=\int_{0}^{1}\int_{\mathbb{S}^q}E(f)(r\xi)w(r)r^qd\sigma_{q}(\xi)dr.$$ Now a change of variables  $r= \sin^2 (\theta/2)$ allows us to conclude that
 \begin{align}
 	\int_S f d\mu= \, \int_0^{\pi} \int_{S^q} F(\theta, \xi) \left(\sin \frac{\theta}{2}\right)^{2q-1} \, \left(\cos \frac{\theta}{2}\right)^{2k+1} \,  d\theta \, d\sigma_{q}(\xi),
 \end{align}
 where $F(\theta, \xi)=E(f)(\sin^2 (\theta /2)\, \xi)$.  
 In \cite{S} Sherman has described the image of $\mathcal{H}_n(S)$ under the map $E.$  It has been proved that  $E(\mathcal{H}_n(S))=\mathcal{H}_{n}(\Omega,w)$  where  $\mathcal{H}_{n}(\Omega,w)$ is the orthocomplement of $\mathbb{P}_{n-1}(\Omega)$ in $\mathbb{P}_n(\Omega)$ with respect to the inner product in $L^2(\Omega, w(|x|) dx).$ Here $\mathbb{P}_n(\Omega)$ denotes the set of all polynomials on $\Omega$ of degree up to $n$. Also note that in these trigonometric polar coordinates we can identify $\Omega$ with $\Omega_0:= (0,\pi)\times\mathbb{S}^q$ and  $$d\omega(\theta,\xi):= \left(\sin \frac{\theta}{2}\right)^{2q-1} \, \left(\cos \frac{\theta}{2}\right)^{2k+1} \,  d\theta \, d\sigma_{q}(\xi)$$ is the corresponding measure on $\Omega_0.$ Basically in view of this trigonometric polar coordinates we have  $\mathcal{H}_{n}(\Omega,w)=\mathcal{H}_n(\Omega_0, \omega).$ These spaces are eigenspaces of the following differential operator
  $$\Lambda_{S}=-\frac{\partial^2}{\partial\theta^2}-\frac{(q-1-k)+(q+k)\cos\theta}{\sin\theta}\frac{\partial}{\partial\theta}- \frac{1}{\sin^2 (\theta/2)}\tilde{\Delta}_{\mathbb{S}^q} + \left(\frac{k+q}{2}\right)^2$$ with eigenvalues $(n+\frac{q+k}{2})^2.$ The relation between this operator and the Laplace-Beltrami operator is described in the following proposition.
  \begin{prop}
  	\label{e1}
  	Let $f\in C^{2}(S)$ and $E$ be as in the Proposition\ref{e}. Then we have $$E(\Delta_{S}f)= \Lambda_{S} E(f).$$   
  \end{prop}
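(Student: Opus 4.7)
The plan is to exploit the spectral decompositions of $L^2(S)$ and $L^2(\Omega_0, \omega)$ and the fact that $E$ intertwines their eigenspace decompositions, then extend the resulting identity from a dense subspace to all of $C^2(S)$. The central observation is that $\Delta_S$ acts on $\mathcal{H}_n(S)$ by the scalar $\lambda_n := (n + (k+q)/2)^2$ and, by the identification $E(\mathcal{H}_n(S)) = \mathcal{H}_n(\Omega_0, \omega)$ established in \cite{S} and recalled just before the statement, $\Lambda_S$ acts by the same scalar $\lambda_n$ on $\mathcal{H}_n(\Omega_0, \omega)$.

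First I would verify the identity on each finite-dimensional eigenspace. For $f \in \mathcal{H}_n(S)$, one immediately obtains
\begin{align*}
E(\Delta_S f) \;=\; E(\lambda_n f) \;=\; \lambda_n E(f) \;=\; \Lambda_S E(f),
\end{align*}
so the identity holds on the algebraic direct sum $\mathcal{F} := \bigoplus_n \mathcal{H}_n(S)$. Next I would extend it to $f \in C^2(S)$ by approximation: writing $f_N = \sum_{n \le N} P_n f$ for the partial sums of the eigenfunction expansion, one has $f_N \to f$ and $\Delta_S f_N \to \Delta_S f$ in $L^2(S)$ (indeed in $H^2(S)$ since $f\in C^2(S)$ on the compact manifold $S$). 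Because $E$ is bounded on $L^2$ by Proposition \ref{e}, applying it term by term gives $E(f_N) \to E(f)$ and $E(\Delta_S f_N) \to E(\Delta_S f)$ in $L^2(\Omega_0, \omega)$. Since $E(P_n f) \in \mathcal{H}_n(\Omega_0, \omega)$ is a $\Lambda_S$-eigenfunction with eigenvalue $\lambda_n$, the previous paragraph gives $E(\Delta_S f_N) = \Lambda_S E(f_N)$ at each finite stage, and passing to the $L^2$-limit produces the identity for $f$.

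The main technical obstacle I anticipate concerns the interpretation of $\Lambda_S E(f)$ at the level of $C^2$ functions: \emph{a priori} $E$ is only known to be bounded on $L^p$, so $\Lambda_S E(f)$ initially makes sense only in a distributional or spectral-theoretic sense. To upgrade this to a pointwise identity on the open set $\Omega_0 = (0,\pi)\times \mathbb{S}^q$, I plan to exploit that the map $E$ is implemented by the smooth change of variables $r = \sin^2(\theta/2)$ underlying the integration formula of Proposition \ref{e}; this forces $E$ to preserve $C^2$-regularity away from the coordinate singularities $\theta = 0, \pi$. The singular behaviour of $\Lambda_S$ at those endpoints (the $1/\sin\theta$ and $1/\sin^2(\theta/2)$ coefficients) matches the Jacobian of the polar decomposition, so no genuine singularity is introduced in the interior. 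Carefully bookkeeping this regularity transfer across the coordinate singularity is the one step that I expect will require the most attention.
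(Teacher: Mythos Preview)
The paper does not give a proof of this proposition: immediately after the statement it writes ``For a proof of this fact we refer the reader to \cite[Lemma 4.25]{S}.'' So there is no argument in the paper to compare yours against directly.

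Your spectral route is a genuinely different strategy from a direct coordinate computation of the type Sherman carries out, and within the paper's internal logic it is sound: both inputs you use---that $E$ carries $\mathcal{H}_n(S)$ onto $\mathcal{H}_n(\Omega_0,\omega)$, and that the latter are eigenspaces of $\Lambda_S$ with eigenvalue $(n+\tfrac{k+q}{2})^2$---are asserted in the text just before the proposition. Two remarks are in order. First, all of these facts, including the proposition itself, are imported from the same reference \cite{S}; depending on how Sherman organises his arguments, your proof may become circular when traced back to origin. If you want it to stand independently you should verify the $\Lambda_S$-eigenvalue claim directly on the explicit basis $Q_{n,j,l}$, which is a routine Jacobi ODE computation after conjugating by $(\sin\tfrac{\theta}{2})^{2j}$ and using that $S_{j,l}$ is a degree-$j$ spherical harmonic; this is comparable in effort to a direct proof of the intertwining. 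Second, the regularity concern you flag is legitimate but inessential for the paper's purposes: the only place Proposition~\ref{e1} is invoked is in the proof of Theorem~\ref{chprojs}, and there it is used purely through the $L^2$/Plancherel identity $\|\Lambda_S^m E(f)\|_2 = \|E(\Delta_S^m f)\|_2$, for which your density argument already suffices without any pointwise upgrade.
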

 For a proof of this fact we refer the reader to  \cite[Lemma 4.25]{S}.
 Thus we have the following orthogonal decomposition  
 $$L^2(\Omega_0, d\omega)=\bigoplus_{n=0}^{\infty}\mathcal{H}_{n}(\Omega_0,\omega).$$ Moreover, $\mathcal{H}_{n}(\Omega_0,\omega)$ admits a further decomposition as $\mathcal{H}_{n}(\Omega_0,\omega)=\bigoplus_{j=0}^{n}\mathcal{H}_{n,j}(\Omega_0,\omega)$  where each $\mathcal{H}_{n,j}(\Omega_0,\omega)$ is irreducible under $SO(q+1)$ and spanned by $\{Q_{n,j,l}: 1\leq l\leq N(j)\}$ (see \cite[Theorem 4.22]{S})  where  for $x=\sin^2(\theta/2) \,\xi,~\theta\in (0,\pi)~\text{and}~\xi\in \mathbb{S}^q$ 
 \begin{align*}
  Q_{n,j,l}(x)&= b_{n,j}\left(\sin\frac{\theta}{2}\right)^{2j}P_{n-j}^{(k,q-1+2j)}\left(2\sin^2(\theta/2)-1\right)S_{j,l}(\xi)\\
  &=(-1)^{n-j}b_{n,j}\left(\sin\frac{\theta}{2}\right)^{2j}P_{n-j}^{(q-1+2j,k)}(\cos\theta)S_{j,l}(\xi). 
 \end{align*}
 In the second equality we have used the symmetry relation for Jacobi polynomials i.e., $P^{(\alpha,\beta)}_{n}(-x)=(-1)^nP^{(\beta,\alpha)}_{n}(x).$
  Here $\{S_{j,l}:1\leq l\leq N(j)\}$ a basis for $\mathcal{H}_{j}(\mathbb{S}^q)$, the spherical harmonics of degree $l$ on $\mathbb{S}^q.$ The constants $b_{n,j}$ appearing in the above expression are chosen so that $\|Q_{n,j,l}\|_2=1$. In fact, it can be checked that 
$b_{n,j}= C(q-1+2j,k, n-j). $ So, clearly $\{Q_{n,j,l}:n,l\geq0, 1\leq N(l)\}$ forms an orthonormal basis for $L^2(\Omega_0, d\omega).$ Now we are ready to state and prove an analogue of Chernoff's theorem on $S$.
 \begin{thm}
		\label{chprojs}
	Let $f \in C^{\infty}(S)$ be such that $\Delta^m_{S}f\in L^2(S)$ for all $m\geq0.$ Assume that 
	\[\sum_{m=1}^{\infty}\|\Delta^m_{S}f\|_2^{-\frac{1}{2m}} =\infty.\]
	If the function $ F $ defined by  $F(\theta, \xi) =E(f)( \sin^2(\theta/2) \xi) $ satisfies $\frac{\partial^m}{\partial\theta^m} \big|_{\theta=0} F(\theta, \xi)=0$ for all $m\geq 0$  and for all $\xi \in \mathbb{S}^{q},$ then $f$ is identically zero.
\end{thm}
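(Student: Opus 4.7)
The plan is to mirror the sphere argument (Theorem \ref{chsphere}): transfer the problem from $S$ to $\Omega_0=(0,\pi)\times \mathbb{S}^q$ via Sherman's map $E$, expand $E(f)$ against the orthonormal basis $\{Q_{n,j,l}\}$, identify the resulting coefficients as Jacobi coefficients of a single-variable function $g_{j,l}$, and then apply Theorem \ref{chernoffJp}.

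First, iterating Proposition \ref{e1} gives $E(\Delta_S^m f) = \Lambda_S^m E(f)$ for every $m \geq 0$, and Proposition \ref{e}(2) yields $\|\Lambda_S^m E(f)\|_{L^2(\Omega_0,d\omega)} \leq \|\Delta_S^m f\|_{L^2(S)}$. Set $c_{n,j,l} = (E(f), Q_{n,j,l})_{L^2(\Omega_0,d\omega)}$; since $\mathcal{H}_n(\Omega_0,\omega)$ is a $\Lambda_S$-eigenspace with eigenvalue $(n+(k+q)/2)^2$, Parseval gives
$$\|\Lambda_S^m E(f)\|_2^2 = \sum_{n\geq 0}\sum_{j=0}^n\sum_{l=1}^{N(j)} \left(n+\tfrac{k+q}{2}\right)^{4m} |c_{n,j,l}|^2.$$
Fix $j,l$, reindex $n=N+j$ with $N \geq 0$, and compute $c_{N+j,j,l}$ from the polar formula for $Q_{N+j,j,l}$ together with $d\omega = (\sin(\theta/2))^{2q-1}(\cos(\theta/2))^{2k+1} d\theta\, d\sigma_q(\xi)$. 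Absorbing the factor $(\sin(\theta/2))^{2j}$ coming from $Q_{N+j,j,l}$ into the weight converts it into the Jacobi weight $\tilde{w}_{q-1+2j,k}(\theta)$, and using the identity $b_{N+j,j}=C(q-1+2j,k,N)$ the coefficient collapses to
$$c_{N+j,j,l} = (-1)^N\, \mathcal{J}_{q-1+2j,k}(g_{j,l})(N), \qquad g_{j,l}(\theta)=\left(\sin\tfrac{\theta}{2}\right)^{-2j}\int_{\mathbb{S}^q} F(\theta,\xi)\overline{S_{j,l}(\xi)}\, d\sigma_q(\xi).$$

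Next, apply the Jacobi Plancherel identity \ref{jplan} with $(\alpha,\beta)=(q-1+2j,k)$: since the Jacobi eigenvalue $(N+(q+2j+k)/2)^2$ equals $((N+j)+(k+q)/2)^2$, we obtain
$$\|\mathbb{L}_{q-1+2j,k}^m g_{j,l}\|_{L^2(\tilde{w}_{q-1+2j,k})}^2 = \sum_{N=0}^\infty \left(N+\tfrac{q+2j+k}{2}\right)^{4m} |c_{N+j,j,l}|^2 \leq \|\Delta_S^m f\|_2^2,$$
so the Carleman condition descends to $g_{j,l}$. The vanishing hypothesis on $F$ makes the inner integral defining $g_{j,l}$ vanish to infinite order at $\theta=0$; dividing by $(\sin(\theta/2))^{2j}$ preserves this, and iterated L'Hospital gives $\mathbb{L}_{q-1+2j,k}^m g_{j,l}(0)=0$ for every $m \geq 0$. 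Theorem \ref{chernoffJp} then forces $g_{j,l}\equiv 0$, whence $c_{N+j,j,l}=0$ for all $N,j,l$, and therefore $E(f)=0$. Since $E$ restricts to an isomorphism $\mathcal{H}_n(S)\to \mathcal{H}_n(\Omega_0,\omega)$ on each piece, $E$ is injective on $L^2(S)$, and we conclude $f=0$.

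The main technical obstacle is the bookkeeping in the computation of $c_{N+j,j,l}$: one must verify that the interplay between $d\omega$, the factor $(\sin(\theta/2))^{2j}$ inside $Q_{N+j,j,l}$, and the normalising constants $b_{N+j,j}$ and $C(q-1+2j,k,N)$ produces exactly the Jacobi weight $\tilde{w}_{q-1+2j,k}$ and the normalised polynomial $\mathcal{P}_N^{(q-1+2j,k)}$, so that the eigenvalue identity $(N+(q+2j+k)/2)^2=((N+j)+(k+q)/2)^2$ yields the Carleman bound without stray constants spoiling the estimate.
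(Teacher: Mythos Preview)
Your proof is correct and follows essentially the same route as the paper: transfer to $\Omega_0$ via Sherman's map $E$, expand in the basis $\{Q_{n,j,l}\}$, recognise the coefficients as Jacobi coefficients of $g_{j,l}$, and invoke Theorem~\ref{chernoffJp}. Your execution is in fact slightly cleaner than the paper's: you observe the exact eigenvalue identity $\bigl(N+\tfrac{q+2j+k}{2}\bigr)^2=\bigl((N+j)+\tfrac{k+q}{2}\bigr)^2$, so the bound $\|\mathbb{L}^m_{q-1+2j,k}g_{j,l}\|_2\le\|\Delta_S^m f\|_2$ follows from Parseval without the auxiliary constant $C$ that the paper introduces, and your Jacobi parameters $(\alpha,\beta)=(q-1+2j,k)$ correct what is evidently a typo in the paper (which writes $\alpha=q-2j+k$).
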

 
  \begin{proof}
  	Given a function $f$ with the property as in the statement of the theorem, we write $E(f)(\sin^2 (\theta/2)\, \xi)=F(\theta,\xi),~(\theta,\xi)\in \Omega_0.$ So, the analysis, described above allow us to write the projection of $F$ onto $\mathcal{H}_{n}(\Omega_0, \omega)$  as 
  	$$P^{S}_nF= \sum_{j=0}^{n}\sum_{l=1}^{N(j)}(F, Q_{n,j,l})Q_{n,j,l}.$$  
  	Now as in the sphere case, it is not hard to check that 
  	\begin{equation}
  	F=\sum_{j=0}^{\infty}\sum_{n=0}^{\infty}\sum_{l=1}^{N(j)} (F, Q_{n+j,j,l})_{L^2(\Omega_0,d\omega)} Q_{n+j,j,l}.
  	\end{equation}
  	Clearly, for each $n\geq 0$ we have 
  	\begin{equation}
  	\label{b1}
  	 (P^S_nF, Q_{n+j,j,l})_{L^2(\Omega_0,d\omega)}= (F, Q_{n+j,j,l})_{L^2(\Omega_0,d\omega)}.
  	\end{equation} 
  	As in the case of sphere, we will show that the right hand side of the above equation can be expressed as Jacobi coefficient of a suitable function related to $F.$ By definition, we have 
  	\begin{equation}
  	(F, Q_{n+j,j,l})=\, \int_0^{\pi} \int_{S^q} F(\theta, \xi)Q_{n+j,j,l}((\sin^2\frac{\theta}{2})\xi ) \left(\sin \frac{\theta}{2}\right)^{2q-1} \, \left(\cos \frac{\theta}{2}\right)^{2k+1} \,  d\theta \, d\sigma_{q}(\xi).
  	\end{equation} 
  	Now using the expression for $Q_{n+j,j,l}$ we have 
  	\begin{equation}
  	(F, Q_{n+j,j,l})= (-1)^{j}b_{n+j,j}\int_0^{\pi} F_{j,l}(\theta)\left(\sin\frac{\theta}{2}\right)^{2j}P_n^{(q-1+2j,k)}(\cos\theta)\left(\sin \frac{\theta}{2}\right)^{2q-1} \, \left(\cos \frac{\theta}{2}\right)^{2k+1} \,  d\theta.
  	\end{equation} where $ F_{j,l} $ are defined by 
  	$$F_{j,l}(\theta):= \int_{S^q}F(\theta,\xi)S_{j,l}(\xi)d\sigma_{q}(\xi).$$
  	Writing $g_{j,l}(\theta)=(-1)^jF_{j,l}(\theta)(\sin\frac{\theta}{2})^{-2j}$ and using the definition of Jacobi coefficients we have 
  	$$(F, Q_{n+j,j,l})_{L^2(\Omega_0,d\omega)}=\mathcal{J}_{\alpha,\beta}(g_{j,l})(n)$$ where 
  	$\alpha=q-2j+k$ and $\beta=k$. Now using the Plancherel formula \ref{jplan} along with \ref{b1} we obtain
  	\begin{align}
  		\|\mathbb{L}_{\alpha,\beta}^mg_{j,l}\|_2^2&= \sum_{n=0}^{\infty}\left(n+\frac{\alpha+\beta+1}{2}\right)^{4m} \, |\mathcal{J}_{\alpha,\beta}(g_{j,l})(n)|^2\nonumber\\
  		&= \sum_{n=0}^{\infty}\left(n+\frac{q-2j+2k+1}{2}\right)^{4m} \, |(P^S_nF, Q_{n+j,j,l})|^2.		
  	\end{align}
  	But $|(P^S_nf, Q_{n+j,j,l})|\leq \|P^S_nf\|_{L^2(\Omega_0,d\omega)}$ and $\left(n+\frac{q-2j+2k+1}{2}\right)\leq C(n+\frac{q+k}{2})$  so that we have
  	\begin{align}
  		\|\mathbb{L}_{\alpha,\beta}^mg_{j,l}\|_2^2\leq C^{4m}\sum_{n=0}^{\infty}\left(n+\frac{q+k}{2}\right)^{4m}\|P^S_nf\|_2^2 =C^{4m} \|\Lambda_{S}^mE(f)\|_2^2
  	\end{align}
 In view of the Proposition \ref{e1} we have $E(\Delta_{S}^mf)= \Lambda_{S}^m E(f)$ and using the fact that operator norm of $E$ is one (see Proposition \ref{e}) we have 
  	$$\|\mathbb{L}_{\alpha,\beta}^mg_{j,l}\|_2^2\leq C^{2m}\|\Delta^m_{S}f\|_2$$
  Hence the given condition $\sum_{m=1}^{\infty}\|\Delta^m_{S}f\|_2^{-\frac{1}{2m}} =\infty$ allows us to conclude that 
  \begin{equation}
  	\sum_{m=1}^{\infty}\|\mathbb{L}_{\alpha,\beta}^mg_{j,l}\|_2^{-\frac{1}{2m}} =\infty.
  \end{equation}
  Also using the hypothesis $\frac{\partial^m}{\partial \theta^m} \big|_{\theta=0} F(\theta, \xi)=0$ for all $m\geq 0, \xi \in \mathbb{S}^q,$  a simple calculation shows that $\mathbb{L}_{\alpha,\beta}^mg_{j,l}(0)=0$ for all $m\geq0.$ Hence by Theorem \ref{chernoffJp}, we have $g_{j,l}=0$ whence $(F, Q_{n+j,j,l})_{L^2(\Omega_0,d\omega)}=0.$  As this  is true for all $n,j,l$ we conclude  $f=0$. This completes the proof of the theorem.
  	\end{proof}
  
%\newpage  
\section*{Acknowledgments}The first author is supported by Int. Ph.D. scholarship from Indian Institute of Science. The second author is thankful to DST-INSPIRE [DST/INSPIRE/04/2019/001914] for the financial support. The third author is supported by  J. C. Bose Fellowship from the Department of Science and Technology, Govt. of India.
%%%%%%%%%%%%%%%%%%%%%%%%%%%%%%%%%%%%%%%%%%%%%%%%%%%%%%

\end{document}